\title{Line, spiral, dense}
\author{Neil Dobbs}\address{
Department of Theoretical Physics, University of Geneva,
24 quai Ernest-Ansermet,
1211 Gen\`eve 4, Switzerland.}
 \thanks{The author was supported by the Academy of Finland CoE in Analysis and Dynamics Research and by the ERC Bridges project, while carrying out this work.}
\newcommand\cQ{\mathcal{Q}}
\newcommand\cP{\mathcal{P}}
\newcommand\cA{\mathcal{A}}
\newcommand\cU{\mathcal{U}}
\newcommand\cbar{{\overline{\mathbb{C}}}}
\newcommand\remark{\noindent \emph{Remark: }}
\newcommand\cL{\mathcal{L}}
\newcommand\cW{\mathcal{W}}
\newcommand\re{{\Re}}
\newcommand\im{{\Im}}
\newcommand\eps{\varepsilon}
\newcommand\arr{\mathbb{R}}
\newcommand\ccc{\mathbb{C}}
\newcommand\R{\mathbb{R}}
\newcommand\Z{\mathbb{Z}}
\newtheorem{thm}{Theorem}
\newtheorem{lem}[thm]{Lemma}
\newtheorem{cor}[thm]{Corollary}
\begin{document}
\date{\today}
\begin{abstract}
    Exponential of exponential of almost every line in the complex plane is dense in the plane. On the other hand, for lines through any point, for a set of angles of Hausdorff dimension one, exponential of exponential of a line with angle from that set  is not dense in the plane.  The third iterate of an oblique line is always dense. 
\end{abstract}
    
\maketitle

\section{Introduction}
In 1914, Harald Bohr and Richard Courant showed that for the Riemann zeta function, if $\sigma \in (\frac12, 1]$, then $\overline{\zeta(\sigma + i \arr)} = \ccc$, i.e.\ the image of any vertical line with real part in $(\frac12, 1]$ is dense \cite[\S4, p.271]{bohr1914neue}. One hundred years on, we ask what happens under the exponential map. 

One may picture the exponential map, $\exp : z \mapsto e^z \in \ccc$, as mapping Cartesian coordinates onto polar coordinates, since $\exp(x + iy) = e^x e^{iy}$. It maps vertical lines to circles centred on $0$ and maps horizontal lines to rays emanating from $0$. The map is infinite-to-one and $2\pi i$-periodic;  preimages of a point lie along a vertical line.  Oblique (slanted)  lines get mapped to logarithmic spirals. 

Applying exponential a second time, what happens? See Figure~\ref{fig:zero}. Circles are compact, so their images are compact. Rays are subsets of lines, so they get mapped into circles, rays or logarithmic spirals. Intriguingly, the image of a logarithmic spiral under exponential is not obvious, and for good reason. 

\begin{figure}
    \centering
        \def\svgwidth{0.9\columnwidth}
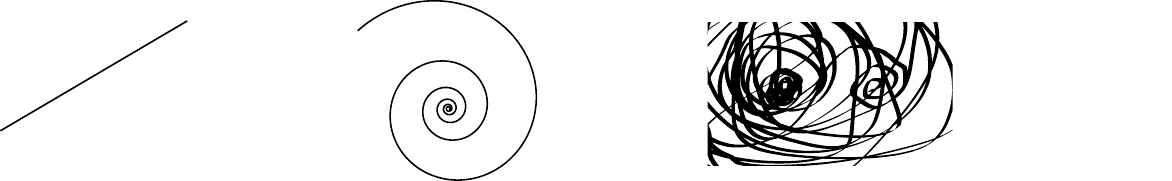
\caption{Line, spiral, what?}
    \label{fig:zero}
\end{figure}

For $p \in \ccc$, $\alpha \in \arr$, let $L_\alpha(p) := \{p + t(i + \alpha) : t  \in \arr\}$.  
Set $\cL(p) := \{L_\alpha(p) : \alpha \in \arr\}$, the family of non-horizontal lines through a point $p \in \ccc$, parametrised by $\alpha \in \R$. With this parametrisation, there is a natural one-dimensional Lebesgue measure on the set $\cL(p)$. It is equivalent to the measure obtained when parametrising the family by angle (points on the half-circle). 

\begin{thm} \label{thm:one}
        Given $p \in \ccc$, for Lebesgue almost every $\alpha \in \R$, 
        $$ 
            \overline{\exp \circ \exp(L_\alpha(p))} = \ccc.
        $$ 
        \end{thm}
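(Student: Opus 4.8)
The plan is to reduce the density of $\exp \circ \exp(L_\alpha(p))$ to a metric equidistribution statement about geometric sequences, which carries the real content. Write $p = a + ib$. Since $\exp(\bar z) = \overline{\exp(z)}$ and a direct check gives $\overline{L_\alpha(p)} = L_{-\alpha}(\bar p)$, the set of ``good'' parameters for $p$ is the negative of the set of good parameters for $\bar p$; moreover the horizontal case is excluded and $\alpha = 0$ (where $\exp(L_0(p))$ is a circle, so the conclusion fails) is a single value. Hence it suffices to show that for every $q = a+ib \in \C$ and Lebesgue-a.e.\ $\alpha > 0$ the conclusion holds for $L_\alpha(q)$. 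Fix $\alpha > 0$. Parametrised by its unwrapped argument $\theta \in \R$, the logarithmic spiral $\exp(L_\alpha(q))$ is $w(\theta) = A e^{\alpha\theta} e^{i\theta}$ with $A := e^{a - \alpha b} > 0$, so $\exp(w(\theta))$ has modulus $e^{A e^{\alpha\theta}\cos\theta}$ and argument $A e^{\alpha\theta}\sin\theta \bmod 2\pi$. Put $C_m := A e^{\alpha\pi/2} e^{2\pi\alpha m}$.

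\emph{Geometric step.} I claim that if $(C_m \bmod 2\pi)_{m\ge 1}$ is dense in $[0,2\pi)$, then $\overline{\exp \circ \exp(L_\alpha(q))} = \C$. Fix a target $\rho e^{i\phi}$ with $\rho > 0$ and $\eps > 0$. For $m$ large, $s \mapsto \Re w(\tfrac{\pi}{2} + 2\pi m + s) = -C_m e^{\alpha s}\sin s$ is strictly decreasing near $0$ and passes through $\log\rho$, so there is a unique small $s_m = -\log\rho/C_m + o(1/C_m)$ such that $\theta_m := \tfrac{\pi}{2} + 2\pi m + s_m$ satisfies $\Re w(\theta_m) = \log\rho$ exactly; a short expansion then gives $\Im w(\theta_m) = C_m e^{\alpha s_m}\cos s_m = C_m - \alpha\log\rho + o(1)$. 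Thus $\exp(w(\theta_m))$ has modulus exactly $\rho$ and argument $C_m - \alpha\log\rho + o(1) \pmod{2\pi}$; choosing, by density, arbitrarily large $m$ with $C_m - \alpha\log\rho$ within $\eps/(2\rho)$ of $\phi$ modulo $2\pi$, and large enough that the $o(1)$ term is also $< \eps/(2\rho)$, yields $|\exp(w(\theta_m)) - \rho e^{i\phi}| < \eps$. Letting $\rho \to 0$ places $0$ in the closure as well, so the closure is $\C$.

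\emph{Metric step (the crux).} It remains to prove that $(C_m \bmod 2\pi)_m$ is dense for a.e.\ $\alpha > 0$; in fact I will get equidistribution. On a compact interval $[\alpha_1,\alpha_2] \subset (0,\infty)$, regard $f_m(\alpha) := \tfrac{1}{2\pi}C_m(\alpha) = \tfrac{e^a}{2\pi} e^{\alpha\mu_m}$, $\mu_m := \tfrac{\pi}{2} - b + 2\pi m$, as $C^1$ functions of $\alpha$. For $m > n$ large, $f_m'/f_n' = (\mu_m/\mu_n)e^{\alpha(\mu_m - \mu_n)} \ge e^{2\pi\alpha_1(m-n)} > 1$, so $f_m' - f_n' \ge (e^{2\pi\alpha_1} - 1) f_n'$ is bounded below by a positive constant and grows at least like $e^{2\pi\alpha_1\min(m,n)}$, while $f_m'' - f_n'' > 0$ shows $f_m' - f_n'$ is monotone — exactly the hypotheses of Koksma's metric theorem on uniform distribution mod $1$. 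Concretely, van der Corput's inequality gives $\bigl|\int_{\alpha_1}^{\alpha_2} e^{2\pi i h (f_m - f_n)}\,d\alpha\bigr| \lesssim 1/\bigl(|h|\,(f_m' - f_n')_{\min}\bigr)$, whence $\int_{\alpha_1}^{\alpha_2}\bigl|\tfrac1N\sum_{m\le N} e^{2\pi i h f_m(\alpha)}\bigr|^2 d\alpha = O(1/N)$ for each $h \in \Z\setminus\{0\}$; Borel–Cantelli along $N = k^2$ with interpolation gives a.e.\ convergence to $0$, and Weyl's criterion then gives equidistribution for a.e.\ $\alpha \in [\alpha_1,\alpha_2]$. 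Exhausting $(0,\infty)$ by such intervals and intersecting over $h$ completes the proof.

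The exceptional null set depends on $q$ (through $a, b$), which the statement permits. The essential difficulty is precisely this last step: distribution modulo $2\pi$ of a sequence $\xi\lambda^m$ is intractable for a fixed base $\lambda$, so one genuinely must pass to the almost-everywhere statement and verify the monotonicity and derivative-gap conditions above; once the spiral is in polar coordinates, everything else is routine bookkeeping.
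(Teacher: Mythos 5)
Your argument is correct, but it is not the route the paper takes; it is, however, precisely the alternative route the paper acknowledges in the remark following Corollary~\ref{cor:theta} (``One could use Lemma~\ref{lem:2pidense} to prove Theorem~\ref{thm:one} (using Koksma's theorem)''). The paper's actual proof is a self-contained measure-theoretic argument: for each ball $U$ it considers $X_U=\{\alpha: \exp\circ\exp(L_\alpha)\cap U\neq\emptyset\}$ and shows every $\alpha_0\neq 0$ is an $\eps$-density point of $X_U$, by pulling $U$ back to a periodic set $V$ along a vertical line $H$, analysing the components $S_k$ of $\exp^{-1}(H)$, and comparing the line segment $J_r$ to $S_k$ via a central projection with bounded distortion. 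Your proof instead reduces density of the image to density mod $2\pi$ of the crossing moduli $C_m=e^{a}e^{\alpha\mu_m}$ of the spiral with the positive imaginary axis --- your ``geometric step'' is in effect a quantitative proof of one direction of Lemma~\ref{lem:2pidense} --- and then verifies the hypotheses of Koksma's metric theorem ($f'_m-f'_n$ monotone and uniformly bounded below) for $f_m(\alpha)=\tfrac{e^a}{2\pi}e^{\alpha\mu_m}$. Each approach buys something: yours yields the stronger conclusion that the crossings are \emph{equidistributed} for a.e.\ $\alpha$ and connects directly to the $\theta^k\bmod 1$ material of Corollary~\ref{cor:theta}, but it outsources the analytic content to Koksma; the paper's density-point argument is elementary and self-contained, and its byproduct --- that each $X_U$ is open and of full measure --- is exactly what drives the residuality statement of Theorem~\ref{thm:three}, which your route does not immediately give. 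Minor points, none fatal: you should note (as you implicitly do via equidistribution) that density of every \emph{tail} of $(C_m\bmod 2\pi)$ is needed to absorb the $o(1)$ error, and that the finitely many indices $m$ with $\mu_m\le 0$ must be discarded before applying Koksma; both are routine.
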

        
    From the topological perspective, a property is \emph{generic} in some space if it holds for all points in a \emph{residual set}, that is, a set which can be written as a countable intersection of open, dense sets. 

\begin{thm} \label{thm:three}
            For each $p \in \ccc$, the set $\{\alpha \in \arr  : \overline{ \exp\circ \exp(L_\alpha(p))} = \ccc \}$ is residual in $\R$. 
    \end{thm}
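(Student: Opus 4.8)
The plan is to exhibit
$$
G := \{\alpha \in \R : \overline{\exp\circ\exp(L_\alpha(p))} = \ccc\}
$$
as a countable intersection of open dense subsets of $\R$, which is precisely the definition of a residual set. Fix a countable dense set $\{w_n\}_{n\in\N}$ of \emph{nonzero} points of $\ccc$ and, for $n,k\in\N$, set
$$
U_{n,k} := \{\alpha \in \R : \exp\circ\exp(L_\alpha(p)) \cap B(w_n,1/k) \neq \emptyset\}.
$$
The balls $B(w_n,1/k)$ form a basis for the topology of $\ccc$, so a subset of $\ccc$ is dense if and only if it meets every such ball; hence $G = \bigcap_{n,k\in\N} U_{n,k}$, and it suffices to show that each $U_{n,k}$ is open and dense in $\R$.

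Openness is routine. The map $F : \R^2 \to \ccc$, $F(\alpha,t) := \exp\circ\exp(p + t(i+\alpha))$, is continuous, so $F^{-1}(B(w_n,1/k))$ is open in $\R^2$, and $U_{n,k}$ is the image of this set under the projection $(\alpha,t)\mapsto\alpha$, which is an open map; thus $U_{n,k}$ is open. Density is the one substantive point, and in this approach it is immediate from Theorem~\ref{thm:one}: that theorem states exactly that $G$ has full Lebesgue measure in $\R$, so $G$ is dense, and therefore so is every $U_{n,k}\supseteq G$. Hence $G=\bigcap_{n,k}U_{n,k}$ is residual, which is the assertion of Theorem~\ref{thm:three}.

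The main obstacle, had Theorem~\ref{thm:one} not been available, would be to prove density of the sets $U_{n,k}$ directly, and I would attack it as follows. Write $p = a + ib$, so that $\exp(L_\alpha(p))$ is the logarithmic spiral $\gamma_\alpha(t) = e^{a+t\alpha}e^{i(b+t)}$; with $w := w_n$ and $m\in\Z$ one has $\gamma_\alpha(t) = \log w + 2\pi i m$ exactly when $b+t \equiv \arg(\log w + 2\pi i m) \pmod{2\pi}$ and $a+t\alpha = \log|\log w + 2\pi i m|$. Thus for each $j\in\Z$ there is a unique $\alpha = \alpha_{j,m}$, realised at $t = \arg(\log w + 2\pi i m) - b + 2\pi j$, for which $\gamma_{\alpha_{j,m}}$ passes through $\log w + 2\pi i m$; then $\exp(\log w + 2\pi i m) = w$ lies in $\exp\circ\exp(L_{\alpha_{j,m}}(p))$, so $\alpha_{j,m}\in U_{n,k}$. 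For large positive $m$ one has $|\log w + 2\pi i m|\approx 2\pi m$ and $\arg(\log w + 2\pi i m)\approx\tfrac{\pi}{2}$, hence $\alpha_{j,m}\approx(\log(2\pi m)-a)/(\tfrac{\pi}{2}-b+2\pi j)$. Given a target interval $(\alpha_0-\delta,\alpha_0+\delta)$ with $\alpha_0 > 0$ --- the case $\alpha_0 < 0$ being symmetric, and intervals near $0$ reducing to one of these by shrinking to a subinterval avoiding $0$ --- fix $j$ large and positive: then, as $m$ ranges over the positive integers, $\alpha_{j,m}$ increases to $+\infty$ through steps of size $\approx 1/(2\pi j m)\le\delta$ from an initial value that tends to $0$ as $j\to\infty$, so some $\alpha_{j,m}$ falls into the interval. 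Turning the ``$\approx$'' above into honest inequalities --- essentially comparing nearby roots of nearby monotone functions --- is the only genuinely fiddly part, and it is exactly the quantitative bookkeeping that a hands-on proof of Theorem~\ref{thm:one} would need to carry out anyway, which is why deducing Theorem~\ref{thm:three} from Theorem~\ref{thm:one} is the efficient route.
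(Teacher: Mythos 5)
Your proof is correct and follows essentially the same route as the paper: write the set as a countable intersection of the sets $\{\alpha : \exp\circ\exp(L_\alpha(p))\cap B \neq \emptyset\}$ over a countable basis of balls, establish openness via continuity and the openness of the projection, and get density from the full-measure statement of Theorem~\ref{thm:one}. The sketched direct argument for density is a nice supplement but is not needed, and the paper likewise simply invokes Theorem~\ref{thm:one}.
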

\begin{thm} \label{thm:threesteps}
    The image of an oblique line under $\exp \circ \exp \circ \exp$ is dense in $\ccc$.
        \end{thm}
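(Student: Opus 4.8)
The plan is to strip the three exponentials off one at a time, obliqueness ($\alpha\neq0$) being essential already at the first step. Writing $C=e^{p}\neq0$, the image $\exp(L_\alpha(p))=\{Ce^{\alpha t}e^{it}:t\in\R\}$ is a logarithmic spiral $S$; since $\alpha\neq0$ it is a graph over the radius, the point at radius $\rho>0$ being $\rho e^{i\beta(\rho)}$ with $\beta(\rho)=\tfrac1{\alpha}\ln\rho+\beta_{0}$ for a constant $\beta_{0}=\beta_0(C,\alpha)$. (For a horizontal line $\exp(L)$ would be a ray and for a vertical line a circle, and no further iterate of either is dense, so obliqueness is genuinely needed.) Thus $\exp^{2}(L_\alpha(p))=\exp(S)=:\Gamma$ and we must show $\exp(\Gamma)$ is dense. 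Since $\exp$ carries the part of $\Gamma$ lying on a vertical line $\{\Re z=c\}$ onto the circle $\{|w|=e^{c}\}$, with argument equal to $\Im z$, it is enough to prove

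\smallskip
$(\star)$\qquad for every $c\in\R$ and $\eps>0$ the imaginary parts of $\{z\in\Gamma:|\Re z-c|<\eps\}$ are $\eps$-dense in $\R/2\pi\Z$,
\smallskip

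\noindent a statement which is in fact equivalent to $\overline{\exp(\Gamma)}=\ccc$.

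To attack $(\star)$ I would examine $\Gamma=\exp(S)$ near the points where $S$ meets the imaginary axis: these occur at the radii $\rho_{j}$ with $\beta(\rho_{j})=\tfrac\pi2+\pi j$, i.e.\ $\rho_{j}=e^{\alpha(\pi/2+\pi j-\beta_{0})}$, a geometric sequence of ratio $e^{\pi\alpha}$. For $|\rho-\rho_{j}|\lesssim\rho_{j}$ the spiral $S$ is $C^{1}$-close to the straight segment of slope $-\alpha$ through $\pm i\rho_{j}$, so $\Gamma$ contains a controlled perturbation of the logarithmic-spiral arc $\Sigma_{j}=\{e^{x}e^{i(\rho_{j}-\alpha x)}:|x|\le x_{j}\}$ with $x_{j}\to\infty$. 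As $\Sigma_{j}$ has modulus running through $(e^{-x_{j}},e^{x_{j}})$ it crosses every vertical line $\{\Re z=c\}$ infinitely often; at those crossings with $e^{x}$ large, $\cos(\rho_{j}-\alpha x)=ce^{-x}$ forces $\rho_{j}-\alpha x$ near $\tfrac\pi2+\pi\Z$, so the imaginary part there equals $\pm\sqrt{e^{2x}-c^{2}}\approx\pm e^{(\rho_{j}-\pi/2-\pi m)/\alpha}$ as $m$ ranges over $\Z$. Hence $(\star)$ would follow from density in $\R/2\pi\Z$ of the ``phases'' $\pm e^{(\rho_{j}-\pi/2-\pi m)/\alpha}$ over admissible $j,m$ --- equivalently, density near $+\infty$ of $\bigcup_{j}\bigl(\tfrac{\rho_{j}}{\alpha}+\tfrac\pi{\alpha}\Z\bigr)$, for which it would suffice that $\{\rho_{j}\bmod\pi\}$ be dense in $[0,\pi)$.

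The hard part will be precisely this density of phases, and it cannot be extracted from the arcs $\Sigma_{j}$ alone: the $\rho_{j}$ form a geometric progression, and a geometric progression modulo $\pi$ need not be dense (its ratio might be an integer, or its closure a finite or Cantor set), so the naive argument stalls for special $C,\alpha$. I would therefore use all of $\exp(S)$, not just its ``axis'' arcs. Between consecutive crossings of the imaginary axis $S$ passes the positive real direction, where $\Re(S\text{-point})=\rho\cos\theta$ climbs to values of order $\rho_{j}$ --- hence doubly-exponentially large in $j$ --- and falls back to $0$ while $\Im(S\text{-point})$ sweeps from $0$ to $\rho_{j}$; so $\Gamma$ winds around $0$ of order $\rho_{j}$ times with its modulus ranging from $O(1)$ up to doubly-exponentially large, and one more application of $\exp$ produces additional logarithmic-spiral arcs whose phases are of the shape $\exp(\text{affine in the previous phases})$. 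Pooling the phases arising at all of these scales, one should beat the obstruction because obliqueness feeds in a second scale which is incommensurable with the first for generic $\alpha$ (the pitch $1/\alpha$ of the $\Sigma_{j}$ against the log-growth $\pi\alpha$ of the $\rho_{j}$, quotient $\alpha^{2}$), so the combined orbit equidistributes; the remaining resonant slopes would require a further descent. Throughout one has to watch how small ``close'' must be: the two outer exponentials are strongly expanding, the admissible error at a preimage point scales like the reciprocal of the modulus there, and these tolerances turn out to be of the same order as the increments of the relevant phase functions, so the quantitative matching --- rather than any new idea --- is what makes the estimates delicate. Granting the phase-density statement, $(\star)$, and hence the theorem, follows from the geometric bookkeeping sketched above.
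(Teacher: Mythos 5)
Your reduction to the statement $(\star)$ is fine, and your diagnosis of where the difficulty lies is accurate --- but the proposal then runs into exactly the obstruction it identifies and does not overcome it. You reduce $(\star)$ to density modulo $\pi$ (equivalently $2\pi$) of a family of phases built from the geometric progression $\rho_j=e^{\alpha(\pi/2+\pi j-\beta_0)}$, note correctly that a geometric progression mod $\pi$ need not be dense, and then propose to rescue the argument by pooling phases from all the winding of $\Gamma$ and appealing to incommensurability of two scales ``for generic $\alpha$,'' with ``resonant slopes'' requiring ``a further descent.'' That is not a proof of the theorem, which asserts density for \emph{every} $\alpha\neq 0$; and the paper's Theorem~\ref{thm:two} together with Lemma~\ref{lem:2pidense} shows that the precisely analogous density-of-crossings statement one exponential down genuinely fails on a set of $\alpha$ of Hausdorff dimension $1$, so there is no reason to believe the phase-density statement you need can be established for all parameters by an equidistribution argument, and no mechanism is given for the resonant cases.

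The paper's proof avoids number theory entirely by a deterministic intersection argument aimed at the preimage set rather than at the image. Fix an open ball $U$, let $V=\exp^{-1}(U)$, let $H$ be a vertical line meeting $V$, and let $S_k$ be the components of $\exp^{-1}(H)$. Each $S_k$ is asymptotically horizontal, lies in a horizontal strip of height $\pi$, and carries a sequence of balls $B_j^k\subset\exp^{-2}(U)$ (preimages of the $2\pi i$-periodic intersections $H\cap V$) whose radii are a fixed proportion $\delta$ of the gaps between consecutive centres. Elementary geometry then shows: any smooth curve in the far right half-plane with slope in $(\delta/4,\delta/2)$ and horizontal length at least $2+12\pi/\delta$ has imaginary part ranging over an interval of length $\geq 3\pi$, hence crosses some $S_k$, and the slope bound forces it to pass through one of the balls $B_j^k$ rather than slipping between them. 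Every revolution of the spiral $\exp(L_\alpha(p))$ contains such a stretch in the right half-plane, and for large revolutions these stretches are arbitrarily far right and arbitrarily long; so the spiral meets $\exp^{-2}(U)$ and $\exp^{\circ 3}(L_\alpha(p))$ meets $U$, for every oblique line. This is the idea missing from your proposal: the second preimage of any open set is so pervasive (a net of balls strung along near-horizontal curves filling every $3\pi$-window of heights) that a slope-and-length condition alone guarantees intersection, and density of the third iterate requires no equidistribution input at all.
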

    In other words, for each $p \in \ccc$, for every $\alpha \in \R\setminus \{0\}$, 
        $$ 
            \overline{\exp \circ \exp \circ \exp(L_\alpha(p))} = \ccc.
        $$ 
        Of course, every subsequent iterate of an oblique line is also dense. 

        In general, it is hard to determine whether a given line will have dense image or not under $\exp \circ \exp$. Certain ones do, however, and we obtain a concisely defined, explicit, analytic dense curve. Let $a \in (0,1)$ be the binary Champernowne constant (with binary expansion $0.11011100101\ldots$) or any other number whose binary expansion contains all possible finite strings of zeroes and ones. Let $p_* :=  \log (2\pi a) + \frac{\pi}2 i$ and $\alpha_* := \frac{\log 2}{2\pi}$. 
\begin{thm} \label{thm:explicit}
    $\overline{\exp \circ \exp(L_{\alpha_*}(p_*))} = \ccc.$
        \end{thm}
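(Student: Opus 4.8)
The plan is to exploit the tailored choice of $p_*$ and $\alpha_*$. First I would compute the intermediate curve: writing $z(t) := p_* + t(i + \alpha_*)$, one finds
\[
    w(t) := \exp(z(t)) = 2\pi a\, i\, e^{\lambda t}, \qquad \lambda := \tfrac{\log 2}{2\pi} + i,
\]
a logarithmic spiral whose modulus doubles each time $t$ increases by $2\pi$. The key feature is that $e^{2\pi n\lambda} = 2^n$, so $w(2\pi n) = 2\pi i\, a 2^n$ sits on the imaginary axis and $\exp(w(2\pi n)) = e^{2\pi i a 2^n} = e^{2\pi i\{a 2^n\}}$, where $\{x\}$ denotes the fractional part of $x$.

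Next I would record the arithmetic input. The binary expansion of $\{a 2^n\}$ is the expansion of $a$ with its first $n$ digits deleted, so the hypothesis that every finite binary string occurs in $a$ — together with the elementary upgrade to ``every finite string occurs infinitely often'' (if $\sigma$ occurred only up to position $P$, locating an occurrence of $0^{P+|\sigma|}\sigma$ would force a later one) — yields: for every $c \in [0,1]$ and every $\eta > 0$ there are infinitely many $n$ with $|\{a 2^n\} - c| < \eta$. Feeding the points $z(2\pi n)\in L_{\alpha_*}(p_*)$ through $\exp\circ\exp$ already shows that $\overline{\exp\circ\exp(L_{\alpha_*}(p_*))}$ contains the unit circle; the remaining work is to reach every modulus.

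The main step is a minute perturbation of the parameter. Fix a target $\zeta = \rho e^{i\theta}$ with $\rho > 0$; for large $n$ I would take $s_n := -\tfrac{\log\rho}{2\pi a 2^n}$ and $t_n := 2\pi n + s_n$, so that $2\pi a 2^n s_n = -\log\rho$ exactly. Expanding $e^{\lambda s_n} = 1 + \lambda s_n + O(s_n^2)$ in $w(t_n) = 2\pi i a 2^n e^{\lambda s_n}$ and using $2\pi a 2^n s_n^2 = (\log\rho)^2/(2\pi a 2^n) = O(2^{-n})$, one obtains
\[
    \Re w(t_n) = \log\rho + O(2^{-n}), \qquad \Im w(t_n) = 2\pi a 2^n + b + O(2^{-n}), \quad b := -\tfrac{\log 2}{2\pi}\log\rho,
\]
so $\Im w(t_n) \equiv 2\pi\{a 2^n\} + b + O(2^{-n}) \pmod{2\pi}$. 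Then, given $\eta > 0$, I would invoke the previous paragraph to choose $n$ as large as needed with $|\{a 2^n\} - \{\tfrac{\theta - b}{2\pi}\}| < \eta$ and with all $O(2^{-n})$ terms below $\eta$; this makes $\exp\circ\exp(z(t_n)) = e^{\Re w(t_n)} e^{i\Im w(t_n)}$ lie within $O(\rho\eta)$ of $\zeta$. Letting $\eta \downarrow 0$ puts $\zeta$ in the closure, and since $\zeta \neq 0$ was arbitrary and the closure is closed, it must equal $\ccc$.

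I expect the only real subtlety to be the decoupling in the perturbation step: the shift $s_n = O(2^{-n})$ must be simultaneously large enough to move $|\exp w|$ onto any prescribed $\rho$, yet small enough that its effect on $\arg(\exp w) = \Im w \bmod 2\pi$ is merely the bounded, explicit term $b$ (which is cancelled), leaving the dominant phase $2\pi a 2^n$ free to realise any argument via the Champernowne property. Beyond that, the points requiring care are routine: checking that the Taylor error terms are genuinely $O(2^{-n})$, and upgrading ``all finite strings occur'' to ``occur infinitely often'' so that the two conditions imposed on $n$ can be satisfied at once.
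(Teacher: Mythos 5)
Your proposal is correct. It rests on the same two pillars as the paper's proof -- the observation that the spiral $\exp(L_{\alpha_*}(p_*))$ meets the positive imaginary axis exactly at the points $2\pi a 2^n i$, and the fact that the Champernowne-type property of $a$ makes $(\{a2^n\})_{n}$ dense modulo $1$ -- but where the paper then simply invokes Lemma~\ref{lem:2pidense} (whose proof compares the spiral's crossings of a vertical strip with straight segments of slope $-\alpha$ and pushes the resulting density statement through $\exp$), you instead prove the needed implication by hand: the perturbation $t_n = 2\pi n - \frac{\log\rho}{2\pi a 2^n}$ steers $\Re\, w$ onto $\log\rho$ while disturbing $\Im\, w$ only by the explicit constant $b=-\alpha_*\log\rho$ plus $O(2^{-n})$, which you absorb into the target argument. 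Your computation checks out ($2\pi i a 2^n \lambda s_n = \log\rho - i\alpha_*\log\rho$ and the quadratic error is indeed $O(2^{-n})$), and you correctly flag and fix the one genuine subtlety in the arithmetic input, namely upgrading ``every string occurs'' to ``every string occurs infinitely often,'' which the paper uses implicitly when it asserts density of $\{2^k a\}_{k\ge k_0}$ for every $k_0$. What your route buys is a fully self-contained, quantitative argument with explicit error rates and no auxiliary lemma; what the paper's route buys is reusability -- its Lemma~\ref{lem:2pidense} is an equivalence, used again for Corollary~\ref{cor:theta}, whereas you only establish the one direction needed here.
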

        
    \medskip
    In Theorem~\ref{thm:one} we obtained a full-measure set of parameters with dense image. One may be tempted to think that all oblique lines would have dense image under $\exp \circ \exp$. However, this is not true, and 
    to Theorem~\ref{thm:one} there is the following
     complementary statement.

\begin{thm} \label{thm:two}
        For each $p \in \ccc$ and each open set $X \subset \arr$, the set $$\{\alpha \in X  : \overline{ \exp\circ \exp(L_\alpha(p))} \ne \ccc \}$$ has Hausdorff dimension $1$. 
    \end{thm}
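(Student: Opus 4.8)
\emph{Overview.} The plan is to reduce non-density to a confinement property of the sequence of arguments produced by the second exponential, and then to realise the bad set as a countable union of Cantor sets whose dimensions tend to $1$. Throughout, write $\rho_0=e^{\Re p}$, $\phi_0=\Im p$, $q=e^{\pi\alpha}$ and $\gamma=\tfrac12-\phi_0/\pi$, and assume $\alpha>0$ (for $\alpha<0$ run the mirror argument with $t\to-\infty$ and $q$ replaced by $e^{-\pi\alpha}$; $\alpha=0$ gives a circle under $\exp$, hence is trivially bad). The first exponential carries $L_\alpha(p)$ to the spiral $w(t)=\rho_0 e^{t\alpha}e^{i(t+\phi_0)}$, so $|\exp(w(t))|=\exp\!\big(\rho_0 e^{t\alpha}\cos(t+\phi_0)\big)$. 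Fixing a thin annulus $\cA_R=\{1/R<|z|<R\}$ with $R>1$, outside a bounded range of $t$ the curve $\exp\circ\exp(L_\alpha(p))$ meets $\cA_R$ only near the crossing times $t_k=\pi/2-\phi_0+k\pi$ where $\cos(t_k+\phi_0)=0$, and a Taylor expansion at $t_k$ shows that the corresponding arc is, up to an argument error that is less than $\eta/4$ once $R$ is close to $1$ and $h_k$ is large, a logarithmic spiral segment through the point of argument
$$ v_k^*:=(-1)^k h_k\pmod{2\pi},\qquad h_k=h_k(\alpha):=\rho_0 e^{(k+\gamma)\pi\alpha}=\rho_0 q^{\,k+\gamma}, $$
where $h_k$ is the modulus of $w$ at $t_k$. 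The rest of $\exp\circ\exp(L_\alpha(p))$ --- a bounded range of $t$, together with the accumulation point $z=1$ --- is a non-constant real-analytic arc, hence a closed set of zero planar Lebesgue measure, hence nowhere dense. Therefore, \emph{if $v_k^*$ avoids $[\pi-\eta,\pi+\eta]\pmod{2\pi}$ for all large $k$, then $\overline{\exp\circ\exp(L_\alpha(p))}$ misses an open subset of $\{z\in\cA_R:|\arg z-\pi|<\eta/4\}$}, so $\alpha$ is a bad parameter.

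\emph{Cantor sets of confined parameters.} Fix a closed subinterval $I\subset X$ with $\alpha\ge\alpha_*>0$ on $I$. Given small $\eta>0$, I would build nested finite unions of closed intervals $I=\mathcal{E}_{k_0-1}\supset\mathcal{E}_{k_0}\supset\cdots$ with $v_j^*(\alpha)\notin[\pi-\eta,\pi+\eta]\pmod{2\pi}$ for all $k_0\le j\le k$ whenever $\alpha\in\mathcal{E}_k$; the index $k_0$ is chosen so large that the normal form above applies for every $k\ge k_0$ (so $h_k$ and $k+\gamma$ are large, $h_k'=(k+\gamma)\pi h_k$ is enormous, and $h_{k_0}$ winds many times across $I$), all crossings with $k<k_0$ being absorbed into the real-analytic arc. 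At step $k$ one intersects each component of $\mathcal{E}_{k-1}$ with the $h_k$-preimage of the complementary arc; since $h_k$ makes about $(1-\tfrac{\eta}{\pi})q$ windings over each such component, the component is replaced by about $(1-\tfrac{\eta}{\pi})q$ children of length $\asymp 1/h_k'$. When $q$ is close to $1$ one groups $N=N(\eta)$ consecutive steps into one super-step, with $N\to\infty$ and $N\eta\to0$ as $\eta\to0$ (e.g. $N\asymp\eta^{-1/2}$); then a super-step replaces each component by $\asymp q^N\big(1-o(1)\big)$ children of length $\asymp q^{-N}$ times the parent, the rounding of winding numbers and a few over-short children contributing only the $o(1)$. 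Every $\alpha\in E_\eta:=\bigcap_k\mathcal{E}_k$ is bad, and $E_\eta\subset X$.

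\emph{Dimension and conclusion.} In this Cantor set, a component of size $\ell$ is replaced, at each super-step, by $M\asymp q^N(1-o(1))$ children of size $\asymp\ell/q^N$ with sibling gaps of comparable size, so the standard lower bound for dimensions of such Cantor sets (carried by a natural mass distribution) gives
$$ \dim_H E_\eta\ \geq\ \frac{\log\!\big(q^N(1-o(1))\big)}{\log q^N}\ =\ 1-o(1)\qquad(\eta\to0). $$
The point is that the winding number of $h_k$ on the ambient scale grows like $q^k$, so discarding a fixed proportion of every interval at each of infinitely many stages --- which would normally cost a definite amount of dimension --- instead costs only $o(1)$ as $\eta\to0$; this is exactly how the bad set can carry full Hausdorff dimension while being Lebesgue-null (as it must be, by Theorem~\ref{thm:one}). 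Letting $\eta=\eta_n\to0$, the union $\bigcup_n E_{\eta_n}$ lies in $\{\alpha\in X:\overline{\exp\circ\exp(L_\alpha(p))}\ne\ccc\}$ and has Hausdorff dimension $\sup_n\dim_H E_{\eta_n}=1$; being a subset of $\R$, this set has Hausdorff dimension exactly $1$. I expect the main difficulty to be the combinatorial bookkeeping in the Cantor construction --- above all the step-grouping needed when $q\approx1$, so that the dimension genuinely reaches $1$ in the limit --- while the uniform verification of the geometric normal form near the crossing times, though it requires care with the error terms, should be comparatively routine.
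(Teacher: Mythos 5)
Your proposal follows essentially the same route as the paper: your confinement condition on the arguments $v_k^*$ at the imaginary-axis crossings is the paper's Lemma~\ref{lem:angles}, your maps $h_k(\alpha)=\rho_0e^{(k+\gamma)\pi\alpha}$ are exactly the moduli of the paper's projections $\psi_k=\exp\circ\phi_k$, and your $N$-step super-blocks with a length-proportional mass distribution reproduce the paper's $\cW_n$ construction together with Lemma~\ref{lem:mdp}. The details you flag (discarding over-short children, distortion control within a block, letting $N\to\infty$) are precisely the ones the paper carries out, so the outline is sound.
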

    Let $Y$ denote the set of $\theta \in (1,\infty)$ for which 
    $(\theta^k)_{k\geq0}$ is not dense modulo $1$. Kahane~\cite{Kahane:mod1} proved that $Y \subset \arr$ has Hausdorff dimension $1$; however, in any bounded interval, he only obtained dimension close to $1$. In Lemma~\ref{lem:2pidense}, we establish a connection between intersections of a logarithmic spiral with the imaginary axis and density of the image of the spiral under exponential. This allows us to improve Kahane's result  a little. 
        \begin{cor}\label{cor:theta}
        For each open interval $I \subset (1, \infty)$, the set of $\theta \in I$ for which $(\theta^k)_{k\geq 0}$ is not dense modulo $1$ has Hausdorff dimension $1$. 
    \end{cor}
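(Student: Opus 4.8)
The plan is to transfer the Hausdorff-dimension-one statement of Theorem~\ref{thm:two} from the slope parameter $\alpha$ to the base parameter $\theta$ of Kahane's problem, with Lemma~\ref{lem:2pidense} supplying the dictionary. I would fix the base point $p = \log(2\pi) + \tfrac{\pi}{2} i$ and recall the elementary description of $\exp(L_\alpha(p))$ as the logarithmic spiral $z(t) = e^p e^{(\alpha+i)t}$: for $\alpha > 0$ this spiral returns to the positive imaginary axis exactly at the times $t = 2\pi k$, $k \ge 0$, at the points $z(2\pi k) = 2\pi \theta^k i$ with $\theta := e^{2\pi\alpha}$ (each full turn multiplies the modulus by $e^{2\pi\alpha}$). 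Hence $\exp$ carries the $k$-th crossing point to $e^{2\pi i \theta^k}$, a point of the unit circle at argument $2\pi(\theta^k \bmod 1)$, so that $(\theta^k)_{k\ge0}$ is dense modulo $1$ precisely when these image points are dense on the unit circle. Lemma~\ref{lem:2pidense} is the substantive input: density of $(\theta^k)_{k\ge0}$ modulo $1$ implies $\overline{\exp\circ\exp(L_\alpha(p))} = \ccc$; equivalently, $\overline{\exp\circ\exp(L_\alpha(p))} \ne \ccc$ forces $(\theta^k)_{k\ge0}$ to fail to be dense modulo $1$.

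Now fix an open interval $I \subset (1,\infty)$ and put $\Phi(\alpha) := e^{2\pi\alpha}$, a real-analytic increasing bijection of $(0,\infty)$ onto $(1,\infty)$; let $X := \Phi^{-1}(I)$, a nonempty open subset of $\R$. Theorem~\ref{thm:two}, applied to this $p$ and this $X$, gives that $A := \{\alpha \in X : \overline{\exp\circ\exp(L_\alpha(p))} \ne \ccc\}$ has Hausdorff dimension $1$. Since $\Phi$ is a $C^1$ diffeomorphism it is bi-Lipschitz on every compact subinterval of $(0,\infty)$, so writing $A = \bigcup_{n} (A \cap [1/n,n])$ and using that Hausdorff dimension is countably stable and preserved by bi-Lipschitz maps, we get $\HD(\Phi(A)) = \HD(A) = 1$. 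By the implication drawn from Lemma~\ref{lem:2pidense}, every $\theta = \Phi(\alpha)$ with $\alpha \in A$ has $(\theta^k)_{k\ge0}$ not dense modulo $1$, and $\theta \in \Phi(X) = I$; therefore $\Phi(A) \subseteq \{\theta \in I : (\theta^k)_{k\ge0} \text{ not dense modulo } 1\}$, whence the latter set has Hausdorff dimension $\ge 1$, hence exactly $1$ (being a subset of $\R$).

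All the real work sits in Theorem~\ref{thm:two} and Lemma~\ref{lem:2pidense}, which are already available; the only point needing a moment's care is that $\Phi$ is not globally bi-Lipschitz on $(0,\infty)$, which is why one transports dimension through the countable decomposition into compact pieces rather than through a single Lipschitz estimate. One should also confirm that the branch and sign conventions in the crossing computation match the hypotheses of Lemma~\ref{lem:2pidense}, and that the normalisation $p = \log(2\pi) + \tfrac{\pi}{2} i$ is exactly what makes the crossing sequence $(2\pi\theta^k i)$ — hence the relevant residues modulo $1$ — the pure powers $(\theta^k)_{k\ge0}$ rather than an affine perturbation of them; restricting to $\alpha > 0$ is what places $\theta$ in the range $(1,\infty)$ of the corollary, the reflection $\alpha \mapsto -\alpha$ accounting symmetrically for $\theta \in (0,1)$, which is outside the stated range. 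I do not anticipate any genuine obstacle: this really is a corollary.
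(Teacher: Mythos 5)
Your proof is correct and follows essentially the same route as the paper: choose $p$ so that the positive-imaginary-axis crossings of $\exp(L_\alpha(p))$ occur at $2\pi\theta^k i$ with $\theta = e^{2\pi\alpha}$, combine Theorem~\ref{thm:two} with (one direction of) Lemma~\ref{lem:2pidense}, and transport dimension through the change of variable $\theta = e^{2\pi\alpha}$ on the interval $X = (\log I)/2\pi$. Your normalisation $p=\log(2\pi)+\frac{\pi}{2}i$ is in fact the right one (the paper writes $p=2\pi i$, which would give crossings at $e^{\alpha\pi/2}\theta^k i$ rather than $2\pi\theta^k i$ --- an inessential slip), and your explicit locally bi-Lipschitz/countable-stability step is a detail the paper leaves implicit.
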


    \remark
    The above-described phenomena are not unique to the exponential map, the most fundamental of transcendental maps. 
    Once one understands exponential, extensions to  maps such as $z \mapsto \sin(z), \exp(z^n), \exp\circ\exp(z)$ are not hard to devise, but what of a general statement?  

    \remark 
    For a generic entire function of the complex plane, the image of the real line is dense. Indeed, 
    Birkhoff\footnote{The author thanks P.\ Gauthier for a helpful conversation in this regard.} \cite{birkhoff1929} showed the existence of an entire function $f$ whose translates $T_n f : x \mapsto f(x-n)$ 
    approximate polynomials in $\mathbb{Q}[x] + i\mathbb{Q}[x]$ arbitrarily well (on compacts). In particular, $(T_nf)_{n\in \Z}$ is dense in the (Fr\'echet) space of entire functions with the topology of uniform convergence on compacts.  Hence, given an open set $U$ of entire functions, there is some $N \in \Z$ with $T_Nf \in U$. 
     Since the translation operators $T_n$ are continuous, $\bigcup_{n\in \Z} T_n U$ is an open dense set.  Now let $\cU$ be a countable basis of open sets for the topology. The set 
    $$X_\cU := \bigcap_{U\in \cU} \bigcup_{n\in \Z} T_n(U)$$
    is residual. Consider $g \in X_\cU$. One readily checks that the translates $(T_ng)_{n\in \Z}$ enter each set in the basis and hence are dense in the space of entire functions. In particular, the translates approximate all constant functions.  Hence  $\overline{g(\arr)} = \ccc$, as required. 
    The fact that  a generic curve has dense image does not tell one what happens for a particular map or for a subfamily (for example, no logarithmic spiral is dense). Besides Birkhoff-style constructions and curves coming from things resembling $\zeta$-functions, we are unaware of other previously-known dense analytic curves. 


        \medskip
        One can also ask (in the spirit of \cite{BJ:1, BJ:2}) about the distributions of the curves considered, in the following sense. Given $\alpha, p$, let $\rho : t \mapsto \exp\circ \exp (p + t(i + \alpha))$, so $\rho$ parametrises $\exp\circ \exp$ of the line $L_\alpha(p)$. 
For every measurable set $A$ and $T>1$, let $\mu_T(A):= \frac1{2T} m(\{t \in [-T,T]: \rho(t) \in A\})$, where $m$ denotes Lebesgue measure. Then $\mu_T$ is a probability measure. With the weak$^*$-topology on the space of probability measures on $\cbar$, we obtain the following unilluminating result. 
    \begin{thm} \label{thm:weak}
        For every oblique line $L_\alpha(p)$, the corresponding measures $\mu_T$ satisfy 
        $$
        \lim_{T \to \infty} \mu_T = \frac{\delta_0}4 + \frac{\delta_1}2 +\frac{\delta_\infty}4,$$
        where $\delta_z$ denotes the Dirac mass at the point $z$.
    \end{thm}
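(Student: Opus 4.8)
\emph{Proof proposal.} The plan is to compute $\rho$ explicitly and isolate three time-regimes. Writing $p = a+bi$, we have $\exp(p+t(i+\alpha)) = e^{a+t\alpha}\bigl(\cos(b+t)+i\sin(b+t)\bigr)$, hence
\[
|\rho(t)| = \exp\!\bigl(e^{a+t\alpha}\cos(b+t)\bigr),\qquad \arg\rho(t)\equiv e^{a+t\alpha}\sin(b+t)\ (\mathrm{mod}\ 2\pi).
\]
By replacing $L_\alpha(p)$ with $L_{-\alpha}(\bar p)$ and $t$ with $-t$ (which maps $\rho$ to the parametrisation of $\exp\circ\exp(L_{-\alpha}(\bar p))$, fixes $\{0,1,\infty\}\subset\cbar$, and leaves the claimed limit measure invariant since it is supported on the real axis together with $\infty$), we may assume $\alpha>0$. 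It then suffices to show $\int f\,d\mu_T \to \tfrac14 f(0)+\tfrac12 f(1)+\tfrac14 f(\infty)$ for every $f\in C(\cbar)$, splitting the integral over $[-T,0]$ and over $[0,T]$.

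For the regime $t\to-\infty$: since $\alpha>0$, $e^{a+t\alpha}\to0$, so both $\re\exp(p+t(i+\alpha))$ and $\im\exp(p+t(i+\alpha))$ tend to $0$, whence $\rho(t)\to1$. As $f$ is continuous and bounded, Ces\`aro averaging gives $\frac1{2T}\int_{-T}^0 f(\rho(t))\,dt \to \tfrac12 f(1)$; in particular the behaviour of $\rho$ on any bounded time-set is irrelevant.

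For the regime $t\to+\infty$: here $e^{a+t\alpha}\to\infty$. Fix $\eps\in(0,1)$. For $t$ large, $\cos(b+t)\ge\eps$ forces $|\rho(t)|\ge\exp(\eps\,e^{a+t\alpha})\to\infty$, while $\cos(b+t)\le-\eps$ forces $|\rho(t)|\le\exp(-\eps\,e^{a+t\alpha})\to0$; thus $f(\rho(t))$ is within $o(1)$ of $f(\infty)$, respectively $f(0)$, on these sets (the argument of $\rho$ is immaterial on $\cbar$). An elementary equidistribution computation shows that $\{t\in[0,T]:\cos(b+t)\ge\eps\}$ and $\{t\in[0,T]:\cos(b+t)\le-\eps\}$ each have density $\tfrac12-\tfrac1\pi\arcsin\eps$ as $T\to\infty$, while the exceptional set $\{t\in[0,T]:|\cos(b+t)|<\eps\}$ has density $c(\eps):=\tfrac2\pi\arcsin\eps$. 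Hence
\[
\limsup_{T\to\infty}\Bigl|\tfrac1{2T}\textstyle\int_0^T f(\rho(t))\,dt-\tfrac14\bigl(f(0)+f(\infty)\bigr)\Bigr|\ \le\ C\,\|f\|_\infty\, c(\eps),
\]
and letting $\eps\to0$ yields $\frac1{2T}\int_0^T f(\rho(t))\,dt\to\tfrac14 f(0)+\tfrac14 f(\infty)$. Adding the two contributions gives the claim.

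There is no serious obstacle here — consistent with the authors' description of the statement as unilluminating. The only points needing a little care are the order of limits ($\eps\to0$ taken after $T\to\infty$), the quantitative control on the transition regions around the zeros of $t\mapsto\cos(b+t)$ (where $|\rho(t)|$ in fact sweeps through all moderate values but on a time-set of measure $O(e^{-(a+t\alpha)})$, hence of zero asymptotic density), and checking that the reduction to $\alpha>0$ by complex conjugation is compatible with the conjugation-symmetry of the limit measure.
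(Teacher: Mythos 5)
Your proposal is correct and follows essentially the same route as the paper: the $t\to-\infty$ regime contributes $\tfrac12\delta_1$ since $\Sigma(t)\to0$, and for $t\to+\infty$ the positive and negative half-periods of $\cos(b+t)$ (where $\re\Sigma(t)\to\pm\infty$) each acquire asymptotic density $\tfrac14$ in $[-T,T]$, with the transition regions negligible. The paper implements the last step with explicit intervals $I_n^\pm$ of length tending to $\pi$ separated by gaps of length $\approx 2/n$, whereas you use an $\eps$-cutoff on $\cos(b+t)$ and let $\eps\to0$ after $T\to\infty$; this is only a difference in bookkeeping.
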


    
    We shall use $\re(z)$ and $\im(z)$ to denote the real and imaginary parts of a complex number $z$. We denote one-dimensional Lebesgue measure by $m$ and denote the length of an interval $I$ by $m(I)$ or by $|I|$. If $\Sigma : t \mapsto \exp(p + t(i+\alpha))$, then $\frac{d}{dt}\Sigma(t) = \Sigma(t)(i+\alpha)$. Therefore the spiral $\Sigma$ has tangent of slope $-\alpha$ when it intersects the imaginary axis. 

    The proofs are provided in linear fashion. 
\section{Dense analytic curves} 
In this section we prove Theorems~\ref{thm:one}-\ref{thm:threesteps}.
    \begin{proof}[Proof of Theorem~\ref{thm:one}]
            Let $f$ denote $\exp \circ \exp$. Fix $p$ and write $L_\alpha$ for $L_\alpha(p)$.  Let 
                \begin{equation} \label{eq:XU}
                X_U := \{\alpha : f(L_\alpha) \cap U \ne \emptyset\}.
            \end{equation}
        Given a  sequence $(q_n)_{n=1}^\infty$ dense in $\ccc$ and a decreasing sequence of positive reals $(\delta_n)_{n=1}^\infty$ with $\delta_n \to 0^+$, let $\cU := \{B(q_n, \delta_n) : n\geq 1\}$. 
        Then a set is dense in $\ccc$ if and only if it has non-empty intersection with each $U \in \cU$. Since $\cU$ is countable, if for each $U \in \cU$, $X_U$ has full measure, then $X_\infty := \bigcap_{U \in \cU} X_U$ has full measure as a countable intersection of full-measure sets. Of course, for each $\alpha \in X_\infty$, $f(L_\alpha)$ is dense in $\ccc$. 

        Thus proving Theorem~\ref{thm:one}
        reduces to showing that for any open ball $U$, $X_U$ has full measure. We say a point $x$ is an $\eps$-density point for a set $X \subset \arr$ if $\lim_{r \to 0^+} \frac{m(X \cap B(x,r))}{m(B(x,r))} \geq \eps$.  
        By the Lebesgue density point theorem, almost every point of $X$ is a $1$-density point for $X$. 
        On the other hand, if $\eps >0$ and almost every point in $\arr$ is an $\eps$-density point for a set $X \subset \arr$, then the set of $1$-density points for the complement of $X$ has zero measure, so the complement has zero measure, so $X$ must have full measure. 
        It therefore suffices to prove that, 
        given a ball $U$, there exists $\eps >0$ such that each $\alpha_0 \in \arr\setminus\{0\}$ is an $\eps$-density point for $X_U$. So let us do this.

        Let $V := \exp^{-1}(U)$. Then $V$ is an open set. Let $H$ be a vertical line, with real part $h \ne 0$, which intersects $V$, see Figure~\ref{fig:one}. Since $\exp$ is $2\pi i$-periodic, $H \cap V$ contains an open interval $I$ and all $2 \pi i$-translates of $I$. In particular, for any subinterval $T \subset H$ of length at least $2\pi$, 
        \begin{equation}\label{eq:TV}
	m(T \cap V)/m(T) \geq m(I)/4\pi. 
	\end{equation}

\begin{figure}
    \centering
    \def\svgwidth{0.8\columnwidth}
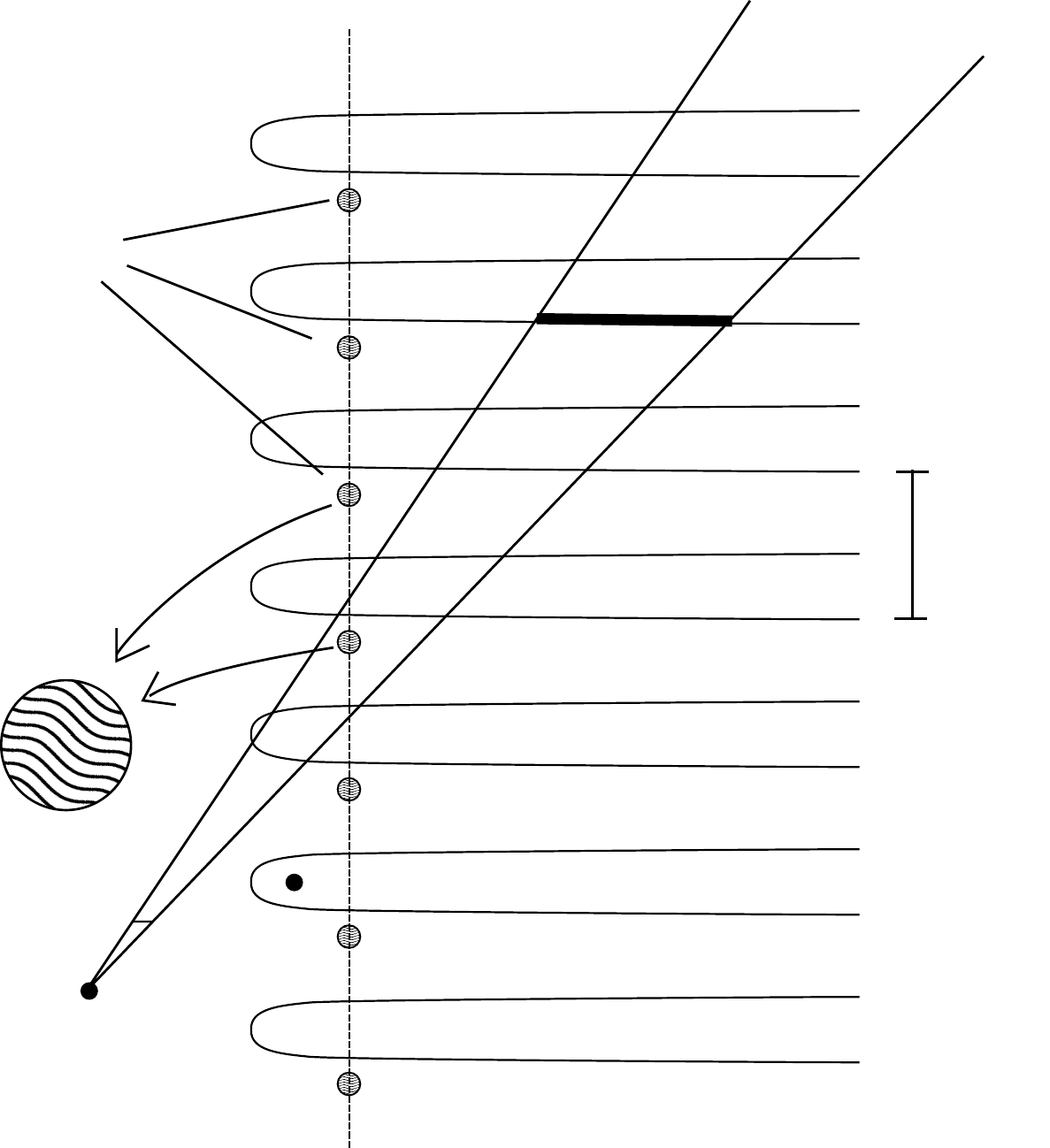
    \caption{An open ball $U$, $V = \exp^{-1}(U)$, a vertical line $H$ passing through $V$, $S = \exp^{-1}(H)$ and the projection $\phi_k$ onto a component $S_k$ of $S$.}
    \label{fig:one}
\end{figure}

        Now consider $S = \exp^{-1}(H)$. 
        If $h >0$ then one connected component of $S$, $S_0$ say, can be parametrised by 
        $$
        \gamma_+ : t \mapsto \frac12 \log(t^2 +h^2) + i \arctan \frac{t}{h}
        $$ 
        with $\gamma_+(\arr) = S_0$. 
        If $h<0$ then $S_0$ can be parametrised by $\gamma_- : t \mapsto \pi i + \gamma_+(t)$. 
        Taking the derivative of $\gamma_+$ and $\gamma_-$, 
        \begin{equation}\label{eq:slope}
        \gamma_+'(t) = \gamma_-'(t) = \frac{t}{t^2 +h^2} + i\frac{h}{t^2 + h^2}, 
    \end{equation}
        so the slope of $\gamma_\pm$ tends to $0$ as $|t| \to  \infty$. 
        For $k \in \Z$, if $\alpha_0 >0$ let
          $S_k := S_0 + 2k\pi i$; otherwise let $S_k := S_0 - 2k \pi i$.  Then $S_k$ for $k \in \Z$ are the connected components of $S$. 

        Let $W_k := S_k \cap \exp^{-1}(V)$. 
        The absolute value of the derivative of $\exp$ on $S$ is bounded below by $|h|> 0$, so any segment of $S_k$ of length at least $2\pi/|h|$ gets mapped onto a segment of $H$ of length at least $2\pi$. 
        The distortion of $\exp$ (by \emph{distortion}, we mean the ratio of the absolute value of the derivative at any two points) is bounded by $e^{4\pi/|h|}$ on each vertical strip of width $4\pi/|h|$. By the distortion bound and~\eqref{eq:TV}, 
         for any segment $B$ of $S_k$ of length between $2\pi/ |h|$ and $4\pi/|h|$, 
        \begin{equation} \label{eq:denseB}
            \frac{m(B \cap W_k)}{m(B)} \geq \frac{m(\exp(B) \cap V)}{m(\exp(B)) e^{4\pi/|h|} }\geq \frac{m(I)}{ 4 \pi e^{4\pi/ |h|}} .
        \end{equation}
	Any segment $B$ of $S_k$ of length at least $2\pi/|h|$ can be divided into segments of length between $2\pi /|h|$ and $4\pi /|h|$, so~\eqref{eq:denseB} continues to hold for all segments $B$ of $S_k$ of length at least $2\pi/|h|$. 

        Let $\xi : \alpha \mapsto p + i + \alpha$. 
        Let $\alpha_0 \in \arr \setminus\{0\}$ and let $r_0 = |\alpha_0|/2$. 
        For $r \in  (0, r_0)$, let $J_r := \xi(B(\alpha_0, r))$ be the open line segment joining the points $p + i + \alpha_0 -r$ and $p + i + \alpha_0 + r$. 
        For some $K \geq 1$, 
        for every $k \geq K$, 
        for each $\alpha \in B(\alpha_0,  r_0)$, 
        $L_\alpha$ intersects $S_k$ transversely (twice).
        For $k \geq K$, let $\phi_k$ denote the central projection with respect to $p$ from $J_{r_0}$ to $S_k$ (taking the first point of intersection). For some $K_0 > K$ and each $k \geq K_0$, $\phi_k(J_{r_0})$ is almost horizontal and the distortion of $\phi_k$ on $J_{r_0}$ is close to $1$, in particular it is bounded by $2$. 
        Now simple geometry entails that $m(\phi_k(J_r)) /\pi kr \to 1$ as $k \to \infty$ so, for each $r \in (0,r_0)$,
        there exists  $k_r  \geq K_0$ with $m(\phi_{k_r}(J_r)) > 2\pi/|h|$. Let $X_r := J_r \cap \phi_{k_r}^{-1}(W_{k_r} ).$ 
        From \eqref{eq:denseB} and the distortion bound of $2$,
         we deduce  that
         $m(X_r)/m(J_r) \geq \eps$, for $\eps := m(I)/8\pi e^{4\pi/|h|} $. 
         For $\alpha \in \xi^{-1}(X_r)$, $L_\alpha \cap W_{k_r} \ne \emptyset$ so $f(L_\alpha) \cap U \ne \emptyset.$
         In particular, $\xi^{-1}(X_r) \subset X_U$ and $$\frac{m(\xi^{-1}(X_r))}{m(B(\alpha_0, r))} \geq \eps.$$
        Noting that $\eps$ depends only on $U$ and $h$, we have shown that $\alpha_0$ is an $\eps$-density point for $X_U$ for each $\alpha_0 \in \arr \setminus \{0\}$.
    \end{proof}
    
    \begin{proof}[Proof of Theorem~\ref{thm:three}]
        Fix $p \in \ccc$. 
        Let $(q_n)_{n=1}^\infty$ be a dense sequence in $\ccc$ and let $(\delta_n)_{n=1}^\infty$ be a decreasing sequence of positive reals with $\delta_n \to 0^+$. 
        Let $\cU := \{B(q_n, \delta_n) : n \geq 1\}$. As per \eqref{eq:XU}, given an open set $U$, let $$X_U := \{\alpha : \exp \circ \exp(L_\alpha(p)) \cap U \ne \emptyset\}.$$
        Since $\exp$ is continuous (so $\exp^{-2}(U)$ is open) and the central projection is an open map, $X_U$ is open.  By Theorem~\ref{thm:one}, $X_U$ has full measure and thus is dense and open for each open set $U$. 
        Consequently, $X_\infty := \bigcap_{U \in \cU} X_U$ is a countable intersection of open, dense sets. As in the proof of Theorem~\ref{thm:one}, each point $\alpha \in X_\infty$ satisfies $\exp \circ \exp (L_\alpha(p))$ is dense. 
    \end{proof}
        
    \begin{proof}[Proof of Theorem~\ref{thm:threesteps}]
        We wish to show that the image of an oblique line under $\exp\circ\exp\circ\exp$ is dense. 
        Let us reprise the notation of the preceding proof, so $U$ is an open set, $V = \exp^{-1}(U)$, $H$ a vertical line (not containing $0$) intersecting $V$, $S = \exp^{-1}(H)$ and $S_k$ the connected components of $S$. 
        Let $v_0$ be a point in $H \cap V$ and let $v_j := v_0 + 2j\pi i$, so $v_j \in H\cap V$ for all $j \in \Z$. 
        Let $w_j^k$ denote the preimage of $v_j$ in $S_k$, and write $\omega_j$ for the real part of $w_j^k$, noting that this is independent of $k$. As $j \to \infty$, $\omega_j$ tends to $+\infty$. Therefore the slope of the line segment $Z^k_j$ joining $w_j^k$ to $w_{j+1}^k$ tends to $0$ as $j \to \infty$ (cf.~\eqref{eq:slope}). Since $H$ is a vertical line, $S_k$ lies in a horizontal strip of height $\pi$, and so 
        $$\gamma_{j_0}^k := \bigcup_{j\geq j_0} Z_j^k$$ is a curve, contained in a strip of height $\pi$, joining $w_{j_0}^k$ to $\infty$. 

        For some $r \in (0,1)$, $B(v_j, r) \subset V$. Estimating via the derivative of $\exp$, we obtain $B(w^k_j, \frac{r}{2|v_j|}) \subset \exp^{-1}(V)$ for all large $j$, and similarly that $|w^k_{j+1} - w^k_j| < 2\pi/|v_j| < 1$. Setting $\delta := r/4\pi$, we deduce that
        $$
        B_j^k := B(w^k_j, \delta |w_{j+1}^k - w_j^k|) \subset \exp^{-1}(V).$$
        
        Simple geometry then entails that if $\rho$ is a smooth curve with slope bounded in absolute value by $\delta/2$ which intersects the line segment $Z^k_j$ and whose projection onto the real line contains $(\omega_j, \omega_{j+1})$, then $\rho$ intersects $B_j^k$. This holds for all $j \geq j_0$, for some large $j_0$, independent of $k$. 

        Now any curve in the half-plane $\{\re(z) > \omega_{j_0}\}$ whose imaginary part has range  at least $3\pi$ long must intersect a curve $\gamma_{j_0}^k$ for some $k$. From this we deduce that if $\rho'$ is a smooth curve 
         contained in $\{\re(z) > \omega_{j_0}\}$, 
        with slope lying in $(\delta/4,\delta/2)$ and of horizontal length at least $2 + 12\pi/\delta$, then $\rho'$ must intersect some $B_j^k$. 
        Indeed, there is a subcurve whose projection is $(\omega_{j_1}, \omega_{j_2})$ (say) and has horizontal length at least $12\pi/\delta$. By the slope estimate, the range of its imaginary part is at least $3\pi$ long, so it intersects some $\gamma_{j_0}^k$, so it intersects some $Z_j^k$, with $j_1 \leq j < j_2$, and so it intersects $B_j^k$. 

        Given an oblique line, under exponential it gets mapped to a spiral $\Sigma$, say. Every revolution, the spiral has two stretches where the slope lies in $(\delta/4, \delta/2)$, one in the right half-plane, one in the left half-plane. Let $(\Sigma_n)_{n \in \Z}$ denote the sequence of those stretches lying in the right half-plane, ordered so that the distance of $\Sigma_n$ from $0$ increases with $n$. For $n$ large enough, $\re(\Sigma_n) \subset (\omega_{j_0}, \infty)$ and the horizontal length of $\Sigma_n$ is arbitrarily large, in particular it can be taken bigger than $2 + 12\pi/\delta$. 
        Therefore it intersects some $B^k_j$. 

        Since $\exp$ of the line intersects $B^k_j$, $\exp \circ\exp\circ\exp$ of the line intersects $U$. This holds for every open set $U$ so the theorem is proven.
    \end{proof}
\section{An explicit dense curve}
    Given $R>1$, let $A_R$ denote the annulus $B(0,R)\setminus B(0, 1/R)$, the image of the vertical strip $H_R := \{z : \re(z) \in [-\log R, \log R)\}$ under $\exp$. 
        \begin{lem}\label{lem:2pidense}
            Let $\Sigma$ be a logarithmic spiral whose intersections with the imaginary axis occur at points $(w_k)_{k\in \Z}$, ordered by distance from $0$. 
            Then $\exp(\Sigma)$ is dense in $\ccc$ if and only if $(w_k/2\pi i)_{k\geq0}$ are dense modulo $1$. 
        \end{lem}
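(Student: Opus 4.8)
The plan is to exploit the structure of a logarithmic spiral $\Sigma$ relative to the imaginary axis. Write $\Sigma = \{\exp(p + t(i+\alpha)) : t \in \R\}$ for suitable $p$ and $\alpha \neq 0$; then, as noted in the introduction, $\Sigma$ crosses the imaginary axis with tangent slope $-\alpha$, and successive crossings $w_k$ differ (in the source line parameter $t$) by a fixed increment. Concretely, the line $p + t(i+\alpha)$ meets the vertical lines $\{\re = n\pi\}$ at an arithmetic progression of $t$-values, so $\exp$ of those points — the crossings of $\Sigma$ with the imaginary axis — are $w_k = w_0 \cdot \lambda^k$ up to sign, for a fixed $\lambda = e^{\pi(i+\alpha)/(\text{slope data})}$; in particular $\re(w_k)$ grows (or decays) geometrically and $\im(w_k)$ is an arithmetic progression modulo the imaginary period. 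The key point is that between consecutive crossings the spiral makes half a turn, so the arc $\Sigma_k$ of $\Sigma$ from $w_k$ to $w_{k+1}$ has modulus comparable to $|w_k|$ and sweeps argument over a half-circle.

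Next I would push this through $\exp$. Since $\exp$ is $2\pi i$-periodic and maps the strip $H_R$ onto the annulus $A_R$, density of $\exp(\Sigma)$ in $\C$ is equivalent to: for every $R>1$, $\exp(\Sigma)$ meets every ball inside $A_R$, which (pulling back by $\exp$) amounts to $\Sigma$ entering every ball of a fixed relative size inside the strip $H_R$ at infinitely many places — up to $2\pi i$ translation, it suffices that $\Sigma$ comes within a fixed horizontal band around the imaginary axis with its imaginary part hitting every residue mod $2\pi$. By the geometry of the arc $\Sigma_k$ described above, near its crossing point $w_k$ the spiral is almost a short straight segment through $w_k$ with controlled slope, and its horizontal extent near the imaginary axis is a fixed fraction of $|w_k|$ — but after dividing by $2\pi i$ and reducing mod $1$, the relevant datum is exactly $\im(w_k)/2\pi = \im(w_k/2\pi i)$. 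So $\exp(\Sigma)$ is dense iff the points $\{w_k/2\pi i \bmod 1 : k \geq 0\}$ are dense in $\R/\Z$. (Only $k \geq 0$ matters because as $k \to -\infty$ the crossings $w_k \to 0$, contributing nothing to density at large scales; the forward crossings are the ones escaping to $\infty$, sweeping all scales.)

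In more detail, for the ``if'' direction: given a target ball $B(q,\eps) \subset A_R$, choose a vertical line $H = \{\re = h\}$ through $\exp^{-1}(q)$ as in the proof of Theorem~\ref{thm:one}, so that $\exp^{-1}(B(q,\eps))$ contains an open interval $I$ on $H$ together with all its $2\pi i$-translates. The crossings of $\Sigma$ with $\{\re = h\}$ are again a geometric-type sequence $w_k'$ whose imaginary parts mod $2\pi$ are a fixed affine image of the $\im(w_k)/2\pi$; density of the latter mod $1$ forces, for infinitely many $k$, $\im(w_k')$ to land in $I + 2\pi i \Z$, whence $\exp(\Sigma)$ meets $B(q,\eps)$. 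For the ``only if'' direction, if $\{w_k/2\pi i\}$ misses an interval $(a,b) \bmod 1$, then for all large $k$ the spiral arc $\Sigma_k$ crosses the imaginary axis at a height avoiding $2\pi(a,b) + 2\pi\Z$; since near the axis $\Sigma_k$ is nearly a straight segment of small slope and small horizontal extent relative to $|w_k|$, one checks that the part of $\Sigma$ lying in a thin fixed-width strip about the imaginary axis never has imaginary part in a slightly shrunk copy of that forbidden set — and the part of $\Sigma$ away from the imaginary axis, after exponentiating, has argument bounded away from $\pm\pi/2 \cdot(\text{stuff})$... more precisely, away from the axis $\exp(\Sigma)$ stays in a sector, missing a neighbourhood of a ray, so $\exp(\Sigma)$ omits an open set.

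The main obstacle I expect is making the ``only if'' direction quantitatively honest: one must show that the \emph{entire} spiral $\Sigma$, not merely its axis-crossing points, avoids an open set when the crossings miss an interval. This requires controlling, for each arc $\Sigma_k$, both (a) how far from the imaginary axis $\exp(\Sigma_k)$ can drift while its image stays near the target annulus — using that away from the axis the real part of $\Sigma$ is bounded away from $0$, so $\exp(\Sigma)$ has modulus bounded away from $1$, eventually leaving $A_R$ — and (b) the Lipschitz control of $\arg \exp(\Sigma_k) = \im(\Sigma_k)$ in terms of arc length, so that a gap in the crossing heights genuinely propagates to a gap in the image. The argument in Lemma~\ref{lem:2pidense} should be short precisely because these estimates are the same half-turn geometry already used for Theorem~\ref{thm:threesteps}; the bookkeeping is in tracking the affine change of variables $w \mapsto w/2\pi i$ consistently.
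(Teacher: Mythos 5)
Your argument is correct and is essentially the paper's: both reduce to a fixed strip/annulus, approximate the spiral near the imaginary axis by straight segments of slope $-\alpha$ through the $w_k$ (so density of the crossing heights mod $2\pi$ is equivalent to density of the $2\pi i$-translates of the spiral arcs in the strip, hence of $\exp(\Sigma)$ in the annulus), and dispose separately of the inner component and of the part of the spiral where $|\re|$ is bounded away from $0$ (whose image has modulus bounded away from $1$). One slip worth fixing: the $w_k$ are purely imaginary, so it is $|w_k| = |\im(w_k)|$ that grows geometrically and $\arg(w_k)$ that is an arithmetic progression --- not ``$\re(w_k)$ geometric, $\im(w_k)$ arithmetic'' --- but this does not damage the proof, since density of $w_k/2\pi i$ modulo $1$ is only ever used as a hypothesis, never derived.
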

        \begin{proof}
            Denote by $\Sigma_k$ the connected component of the $\Sigma \cap H_R$ containing $w_k$.
            There exists $k_0$ for which, for all $k \leq k_0$, $\Sigma_k = \Sigma_{k_0}$, which spirals all the way in to $0$. The set $\exp(\Sigma_{k_0})$ has finite length and is not dense anywhere. Of course this then holds for $\exp(\Sigma_k)$ for each $k$, so we only need to consider positive $k$. 

            If $\Sigma = \exp(L_\alpha(p))$ say, denote by $Z_k$ the intersection of $H_R$ and the line which passes through $w_k$ with slope $-\alpha$. Then the Hausdorff distance of $\Sigma_k$ to $Z_k$ decreases to $0$ as $k \to +\infty$.
            
            Taken sequentially, the following statements are (clearly) equivalent. 
            \begin{itemize}
                \item
                    $(w_k/2\pi i)_{k\geq0}$ is dense modulo $1$. 
        \item
            the union of all $2\pi i$-translates of $\{Z_k\}_{k \geq 0}$ is dense in $H_R$.
            \item
                the union of all $2\pi i$-translates of $\{\Sigma_k\}_{k\geq 0}$ is dense in $H_R$. 
            \item
                $\bigcup_{k \geq 0} \exp(\Sigma_k)$ is dense in $A_R$. 
            \item
                $\exp(\Sigma)$ is dense in $\ccc$. 
        \end{itemize}
        This completes the proof of the lemma.
        \end{proof}
        
    \begin{proof}[Proof of Theorem~\ref{thm:explicit}]
        Let $a \in (0,1)$ have a binary expansion containing all possible strings of zeroes and ones; let $p :=\log (2\pi a) + \frac{\pi}2 i$ and $\alpha := \log 2 /2\pi$ as per Theorem~\ref{thm:explicit}. 
        By choice of $\alpha$, each time the imaginary part of the line $L_\alpha(p)$ increases by $2\pi$, the real part increases by $\log 2$. 
        Thus the intersections of the spiral $\Sigma :=\exp(L_\alpha(p))$ with the positive imaginary axis $(i\R^+$) occur at values $2\pi a 2^ki$, $k \in \Z$. 
        
        By choice of $a$, for all $k_0$ the set $\{2^ka\}_{k \geq k_0}$ is dense modulo 1. Now apply Lemma~\ref{lem:2pidense}.
    \end{proof}

    \section{Hausdorff dimension of the complementary set of parameters}
    In this section we prove Theorem~\ref{thm:two} and Corollary~\ref{cor:theta}.
    The Mass Distribution Principle is a standard source of lower bounds for the Hausdorff dimension. It is infused into the following lemma. 
    \begin{lem} \label{lem:mdp}
        Let $J$ be a non-degenerate interval, let $Y \subset J$ and let $\mu$ be a measure  with $\mu(Y) >0$. 
        For each $n \geq 1$, let $\cP_n$ be a finite partition of $J$ into intervals, each of length at most $2^{-n}$.
        Let $\eps \in (0, 1 )$, let $\beta >1$ and suppose 
        \begin{equation}\label{eq:1}
            \mu(P) \leq \beta (1+\eps)^n |P|
        \end{equation}
        for every $P \in \cP_n$.
        Then the Hausdorff dimension of $Y$ is at least $1 - 2\eps$.
        \end{lem}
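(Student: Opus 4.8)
The plan is to estimate, for each $n$, how many partition intervals $P\in\cP_n$ are needed to cover $Y$, using the density bound~\eqref{eq:1} to control the number of such intervals carrying nontrivial $\mu$-mass. First I would observe that since $\mu(Y)>0$, it suffices to bound the dimension of $Y$ from below by bounding the dimension of any set of positive $\mu$-measure; in fact the Mass Distribution Principle applies directly to $\mu$ restricted to $Y$. The core quantity is a Hausdorff-sum estimate: for a test exponent $s=1-2\eps$, I want to bound $\sum_{P\in\cP_n}|P|^s$ restricted to those $P$ meeting $Y$ (or simply estimate the measure $\mu$ against the $s$-dimensional gauge).

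The key steps, in order. (1) Fix $s = 1-2\eps$ and consider an arbitrary cover of $Y$ by sets of diameter at most $2^{-n}$; by a standard reduction (replacing each covering set by an element of $\cP_n$ it meets, up to a bounded multiplicative factor, or by refining) it is enough to work with covers drawn from $\cP_n$. (2) For $P\in\cP_n$ write $|P|=\ell_P\le 2^{-n}$. From~\eqref{eq:1}, $\mu(P)\le \beta(1+\eps)^n\ell_P$. (3) The Mass Distribution Principle in the form I want: if every set $A$ of small diameter satisfies $\mu(A)\le C\,(\diam A)^s$, then $\HD(\supp\mu\cap Y)\ge s$. So I need to convert the bound $\mu(P)\le \beta(1+\eps)^n|P|$ into a bound of the form $\mu(P)\le C|P|^{s}$. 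Since $|P|\le 2^{-n}$, we have $|P|^{1-s}\le 2^{-n(1-s)}=2^{-2\eps n}$, hence $\mu(P)\le\beta(1+\eps)^n|P|=\beta(1+\eps)^n|P|^{s}|P|^{1-s}\le \beta\big((1+\eps)2^{-2\eps}\big)^{n}|P|^{s}$. (4) Now choose/check that $(1+\eps)2^{-2\eps}\le 1$ for $\eps\in(0,1)$: taking logarithms, this is $\log(1+\eps)\le 2\eps\log 2$, and since $\log(1+\eps)\le\eps\le 2\eps\log 2$ (as $2\log 2>1$), the inequality holds. Therefore $\mu(P)\le\beta|P|^{s}$ for every $P\in\cP_n$, uniformly in $n$. (5) Any set $A$ with $\diam A\le 2^{-n}$ meets at most a bounded number (two, since the $\cP_n$ are partitions into intervals) of elements of $\cP_n$, so $\mu(A)\le 2\beta(\diam A)^{s}$ once $n$ is large enough that $2^{-n}\le\diam A< 2^{-(n-1)}$ — more carefully, $\mu(A)\le 2\beta\,(2\diam A)^{s}\le 2^{1+s}\beta(\diam A)^s$. (6) Apply the Mass Distribution Principle: for any cover $\{A_i\}$ of $Y$ by sets of diameter at most $2^{-n}$, $\mu(Y)\le\sum_i\mu(A_i)\le 2^{1+s}\beta\sum_i(\diam A_i)^s$, so $\sum_i(\diam A_i)^s\ge 2^{-1-s}\beta^{-1}\mu(Y)>0$; letting the mesh go to $0$ gives $\mathcal H^s(Y)\ge 2^{-1-s}\beta^{-1}\mu(Y)>0$, hence $\HD(Y)\ge s=1-2\eps$.

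The only genuinely delicate point is the bookkeeping in step (5)–(6): one must pass from the per-partition-interval estimate~\eqref{eq:1} to a genuine pointwise gauge estimate $\mu(A)\lesssim(\diam A)^s$ valid for \emph{all} small sets $A$, not just the chosen intervals. This is routine because the $\cP_n$ are nested-in-scale partitions into intervals, so any interval of length in $[2^{-n},2^{-(n-1)})$ is covered by at most a bounded number of members of $\cP_n$; the factor absorbed is harmless for the dimension conclusion. I expect no real obstacle — the computation in step (4), namely that $(1+\eps)2^{-2\eps}\le1$ for $0<\eps<1$, is exactly what forces the loss of $2\eps$ (rather than, say, $\eps/\log 2$) in the exponent, and is the reason the lemma is stated with $1-2\eps$.
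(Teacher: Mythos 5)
Your overall strategy (Mass Distribution Principle, converting the linear density bound into an $s$-dimensional gauge bound with $s=1-2\eps$, and the computation $(1+\eps)2^{-2\eps}\le 1$ via $\log(1+\eps)\le\eps\le 2\eps\log 2$) is the same as the paper's, and steps (1)--(4) and (6) are fine. But step (5) contains a genuine gap: you claim that a set $A$ with $\diam A\le 2^{-n}$ (or with $2^{-n}\le\diam A<2^{-(n-1)}$ --- the two conditions you state are inconsistent, but either way) meets at most two elements of $\cP_n$. The hypothesis only bounds the lengths of the elements of $\cP_n$ \emph{above} by $2^{-n}$; they may be arbitrarily short, so arbitrarily many of them can meet $A$. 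This matters in the paper's application, where $\cP_n=\cQ_n\cup\cW_n$ consists of intervals of widely varying (and typically much smaller than $2^{-n}$) lengths. The failure is not merely cosmetic: once many tiny intervals $P$ meet $A$, summing your converted per-element bound $\mu(P)\le\beta|P|^s$ gives $\mu(A)\le\beta\sum|P|^s$, and since $s<1$ the sum $\sum|P|^s$ can vastly exceed $\bigl(\sum|P|\bigr)^s$ (e.g.\ $k$ intervals of length $L/k$ give $k^{1-s}L^s$), so no bound of the form $\mu(A)\lesssim(\diam A)^s$ follows by this route.

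The repair is to postpone the conversion to the exponent $s$: for $A$ with $\diam A\approx 2^{-n}$, the elements of $\cP_n$ meeting $A$ are pairwise disjoint intervals, each of length at most $2^{-n}\le\diam A$, hence all contained in an interval of length at most $3\diam A$; so their \emph{total length} is at most $3\diam A$, and summing the original linear bound \eqref{eq:1} gives $\mu(A)\le 3\beta(1+\eps)^n\diam A$. Only then does one write $(1+\eps)^n\diam A\le C(\diam A)^{1-2\eps}$ using $2^{-n}\approx\diam A$. This is exactly what the paper does (with balls $B(x,r)$, $n=\lceil-\log_2 r\rceil$, and total length $\le 4r$), after which your step (6) goes through verbatim.
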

        \begin{proof}
            For $r\in (0,1)$, let $n := \lceil -\log_2 r\rceil$. Let $x \in J$. If $P \in \cP_n$ then $|P| \leq 2^{-n} \leq r$, so if $P \cap B(x,r) \ne \emptyset$ then $P \subset B(x, 2r)$. 
            The total length of elements of $\cP_n$ intersecting $B(x,r)$ is thus at most $4r$. Summing \eqref{eq:1} over such elements, we deduce that
        $$\mu(B(x,r))/\beta \leq 4r (1+\eps)^n \leq 4r(1+\eps) e^{-\log(1+\eps)\log r/ \log 2} \leq 8r^{1 - \log(1+\eps)/\log 2} .$$ 
        Now $\log 2 > \frac12$ and $\log (1+\eps) < \eps$, so $$\mu(B(x,r))/8\beta \leq  r^{1-2\eps}.$$ 
        If $U_1, U_2, \ldots$ is any countable cover of $Y$ by balls of radius at most $1$, then
        $$\sum_{j\geq 1} |U_j|^{1-2\eps} \geq \sum_{j \geq 1} \mu(U_j)/8\beta \geq \mu(Y)/8\beta >0.$$ Since this positive lower bound does not depend on the cover, the Hausdorff dimension of $Y$ is at least $1 - 2\eps$, as required. 
    \end{proof}
    Together with the following lemma, one can glean an insight into the means of proving Theorem~\ref{thm:two}. 
    \begin{lem} \label{lem:angles}
        Let $I$ be an open subinterval of the imaginary axis  and let $\hat I := \bigcup_{k\in \Z}(2k\pi i + I)$ be the union of all $2\pi i$-translates of $\hat I$. Suppose $\hat I$ is disjoint from $B(0,1)$. Let $p \in \ccc$.
        Let $Y$ be a compact subset of $\arr$ 
        and suppose that $\exp (L_\alpha(p)) \cap \hat I = \emptyset$ for every $\alpha \in Y$. 
        Then there is an open set $U$ with $\exp \circ \exp (L_\alpha(p)) \cap U = \emptyset$ for each $\alpha \in Y$. 
    \end{lem}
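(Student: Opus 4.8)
The plan is to exploit the key observation already noted at the end of Section 1: if $\Sigma : t \mapsto \exp(p + t(i+\alpha))$, then $\Sigma'(t) = \Sigma(t)(i+\alpha)$, so where $\Sigma$ crosses the imaginary axis its tangent has slope $-\alpha$. More generally, the spiral $\exp(L_\alpha(p))$ has slope close to $-\alpha$ at every point it passes near the imaginary axis that is far from $0$ — indeed, as in Lemma~\ref{lem:2pidense}, each arc $\Sigma_k$ of the spiral crossing a fixed vertical strip near the imaginary axis is Hausdorff-close to the straight segment $Z_k$ of slope $-\alpha$ through the crossing point $w_k$, with the error going to $0$ as $|k| \to \infty$. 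The hypothesis $\exp(L_\alpha(p)) \cap \hat I = \emptyset$ says precisely that the spiral avoids an open sub-interval of the imaginary axis together with all its $2\pi i$-translates.

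First I would set up a vertical strip $H_R = \{z : |\re(z)| \le \log R\}$ with $R$ chosen so that $\hat I \subset H_R$ and $\hat I \cap B(0,1/R) = \emptyset$; this is possible since $\hat I$ is disjoint from $B(0,1)$. Working in $H_R$, the preimage $\exp^{-1}(H_R)$ is a countable union of horizontal strips of height $2\pi$, and the image $\exp(H_R) = A_R$ is an annulus. The condition ``$\exp(L_\alpha(p))$ avoids $\hat I$'' means that inside $H_R$ the spiral avoids the set $\hat I$, which is a union of vertical segments spaced $2\pi i$ apart on the imaginary axis. Reducing $I$ slightly (to a compactly contained sub-interval $I'$, still with $\hat{I'} \cap B(0,1) = \emptyset$), I claim that for $\alpha$ ranging over the \emph{compact} set $Y$ the spirals $\exp(L_\alpha(p))$ stay a definite distance $\eta > 0$ away from $\hat{I'}$, \emph{uniformly in $\alpha \in Y$}. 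This uniformity is where compactness of $Y$ is used: the map $(\alpha, t) \mapsto \exp(p + t(i+\alpha))$ is continuous, the ``first return to $H_R$'' arcs depend continuously (in Hausdorff distance) on $\alpha$, and the hypothesis forces each such arc to miss the closed set $\overline{\hat{I'}}$; a compactness/continuity argument then yields a uniform gap $\eta$.

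Next, translate this uniform gap through the \emph{second} exponential. We have $\exp(A_R \setminus N_\eta(\hat{I'})) $, where $N_\eta$ denotes the $\eta$-neighbourhood; but more to the point, let $W := \exp(\text{the open } \eta\text{-neighbourhood of } I' \text{ in } H_R)$. Since the imaginary axis in $H_R$ maps under $\exp$ to the unit circle arc near the point $e^{i \cdot(\text{stuff})}$, the set $W$ is a genuine open subset of $\ccc$, and by $2\pi i$-periodicity of $\exp$ the $\eta$-neighbourhood of $\hat{I'}$ maps \emph{onto} $W$ (all the $2\pi i$-translates collapse). Now for $\alpha \in Y$: $\exp(L_\alpha(p))$ avoids the $\eta$-neighbourhood of $\hat{I'}$, hence $\exp\circ\exp(L_\alpha(p))$ avoids $\exp(\eta\text{-neighbourhood of }\hat{I'}) \supset W$... wait — one must be careful, since $\exp$ is not injective the image of the complement is not the complement of the image. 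The correct statement: $\exp\circ\exp(L_\alpha(p)) = \exp$ applied to a set contained in $\ccc \setminus (\eta\text{-nbhd of }\hat{I'})$. Points of $\ccc$ whose \emph{every} preimage lies outside the $\eta$-neighbourhood of $\hat{I'}$ form exactly $\ccc \setminus W$ together with possibly the region outside $A_R$ issue — but actually a point $z$ lies in $W$ iff \emph{some} $2\pi i$-translate of $\log z$ lies in the $\eta$-nbhd of $I'$, iff $\log z$ (any branch) lies in the $\eta$-nbhd of $\hat{I'}$. So $z \notin W \iff$ no preimage of $z$ under $\exp$ lies in $\eta$-nbhd of $\hat{I'}$. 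Since every point of $\exp(L_\alpha(p))$ lies outside the $\eta$-nbhd of $\hat{I'}$ (for $\alpha \in Y$), applying $\exp$ gives a set all of whose points $z$ have \emph{a} preimage (namely the relevant point of $\exp(L_\alpha(p))$) outside that neighbourhood — that is not enough. I instead argue directly: every point of $\exp\circ\exp(L_\alpha(p))$ is $\exp(\zeta)$ for some $\zeta \in \exp(L_\alpha(p))$, and $\zeta$ is distance $\ge \eta$ from $\hat{I'} \ni$ all translates; equivalently $\zeta$ avoids the $\eta$-nbhd of $I'$ \emph{and} of every translate, so in particular $\zeta$ reduced mod $2\pi i$ avoids the $\eta$-nbhd of $I'$, so $\exp(\zeta) \notin W$. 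Thus $\exp\circ\exp(L_\alpha(p)) \cap W = \emptyset$ for every $\alpha \in Y$; set $U := W$.

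The main obstacle is the uniform-gap step: extracting, from the pointwise avoidance hypothesis and compactness of $Y$, a single $\eta > 0$ that works for all $\alpha \in Y$ simultaneously. This requires controlling the arcs of the spiral $\exp(L_\alpha(p))$ inside $H_R$ uniformly — both the finitely many ``early'' arcs (near $0$) and the infinitely many ``late'' ones. For the late arcs the slope-tending-to-$-\alpha$ estimate (equation~\eqref{eq:slope}, suitably adapted) gives the needed uniform Hausdorff control, reducing the question to a statement about the straight segments $Z_k(\alpha)$ of slope $-\alpha$ through the crossing points; for the early arcs there are only finitely many and continuity in $\alpha$ on the compact set $Y$ suffices. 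A minor technical point to watch is ensuring $I'$ can be chosen so that the crossing points of $L_\alpha(p)$ with the imaginary axis that matter all fall in the ``late'' regime uniformly over $Y$, which again follows since $Y$ is bounded.
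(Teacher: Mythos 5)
Your overall strategy is essentially the paper's: use the fact that $\exp(L_\alpha(p))$ has uniformly bounded slope near the imaginary axis (uniform over the bounded set $Y$) to upgrade the hypothesis ``the spiral misses the one-dimensional set $\hat I$'' to ``the spiral misses a genuinely open, $2\pi i$-periodic set'', and then take $U$ to be the $\exp$-image of one period of that set. Your final step is sound: if $\zeta\in\exp(L_\alpha(p))$ had $\exp(\zeta)\in W:=\exp(N)$ with $N$ the chosen periodic open set intersected with one fundamental strip, then some $2\pi i$-translate of $\zeta$ would lie in $N$, contradicting the avoidance; this is exactly the paper's observation that $\exp^{-1}(U)\subset\widehat D$.

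The gap is the uniform-gap claim itself, which you label ``the main obstacle'' but do not actually establish. A soft compactness/continuity argument cannot produce $\eta$ on its own: for fixed $\alpha$ the spiral has infinitely many arcs crossing the strip, so $\dist\bigl(\exp(L_\alpha(p)),\hat I'\bigr)$ is an infimum over a non-compact family, and continuity in $\alpha$ of each individual arc gives no control of that infimum. You correctly propose to reduce the far-out arcs to the straight segments $Z_k(\alpha)$ of slope $-\alpha$ through the crossing points $w_k\notin\hat I$, but you stop precisely where the content lies: one still must show that a curve of slope bounded in absolute value by $C$, meeting the imaginary axis only outside $\hat I$, cannot approach $\hat I'$. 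The quantitative fact needed is that such a curve cannot enter the rhombus with vertical diagonal $I$ and sides of slope $\pm C$ (nor its $2\pi i$-translates): following the curve, as a graph of bounded slope, from an interior point of the rhombus to the imaginary axis, the vertical displacement is less than $C$ times the horizontal run, forcing the curve to hit the diagonal $I$ itself --- a contradiction. This is exactly the paper's construction of $D$ and $U:=\exp(B(x,|I|/4C))$ with $x$ the midpoint of $I$. Once this elementary geometric step is inserted, your argument closes; moreover it renders the shrinking of $I$ to $I'$, the Hausdorff approximation of arcs by segments, and the early/late case split unnecessary, since the slope bound in a fixed region $\{|\re(z)|\ \text{small},\ |\im(z)|\ \text{bounded below}\}$ handles all arcs at once.
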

    \begin{proof}
        Differentiating $t \mapsto \exp(p + t(i + \alpha))$ gives $(i+\alpha)\exp(p + t(i+\alpha))$. Thus $\exp(L_\alpha(p))$ has slope $-\alpha$ at each intersection with the imaginary axis. 
        Moreover, since $Y$ is bounded, there is a constant $C >1$ such that the slope of $\exp(L_\alpha(p))$ is bounded in absolute value by $C$ in the region $$\left\lbrace z : |\re(z)| < \frac12, |\im(z)| > \frac12 \right\rbrace.$$ 
	Let $D$ denote the body of the rhombus with diagonal $I$ and sides of slope~$\pm C$, and $\widehat{D}$ the union of all $2\pi i$-translates of $D$. 
	Then $\exp(L_\alpha(p)) \cap \widehat{D} = \emptyset$ for each $\alpha \in Y$. 
        Let $x$ be the midpoint of $I$ and denote by $U$ the open set $\exp(B(x, |I|/4C)$. By construction, $B(x, |I|/4C) \subset D$ so $\exp^{-1}(U) \subset \widehat{D}$.
	Thus $\exp\circ\exp(L_\alpha(p)) \cap U = \emptyset$ for each $\alpha \in Y$, as required. 
    \end{proof}

    Now we can prove Theorem~\ref{thm:two}, which states that 
        for each $p \in \ccc$ and each open set $X \subset \arr$, the set $\{\alpha \in X  : \overline{ \exp\circ \exp(L_\alpha(p))} \ne \ccc \}$ has Hausdorff dimension $1$. 
        \begin{proof}[Proof of Theorem~\ref{thm:two}]
        We can assume $0 \notin X$. 
        Writing $\sigma$ for the map sending points to their complex conjugates, $\exp \circ\, \sigma = \sigma \circ \exp$ and $\sigma(L_\alpha(p)) = L_{-\alpha}(\sigma(p))$ so,  without loss of generality (replacing $p$ by $\sigma(p)$ and $X$ by $-X$, if necessary), one can assume $X \subset \arr^+$. 

        Given $X$ and $p$, let $X' = (\alpha_0, \alpha_1)$ be a non-degenerate subinterval of $X$ with $0 < \alpha_0 < \alpha_1$. Let $\xi : \alpha \mapsto p + i + \alpha$ and let $J$ be the line segment $\xi(X')$. 
        For $k \in \Z$, let
        $S_k := (k + \frac12 ) \pi i + \arr$.
        Then $\exp(S_k)$ is a vertical ray leaving $0$, heading up if $k$ is even and down if $k$ is odd. 
        Let $\phi_k$ be the central projection with respect to $p$ from $J$ to $S_k$, so 
        $$\phi_k(p + i + \alpha) = \re(p) + \left((k + \frac12)\pi - \im(p)\right) \alpha +  i \left((k + \frac12)\pi - \im(p)\right).$$
        In particular, as a map from $J$ to $S_k$, $\phi_k$ is affine with derivative 
        $$D\phi_k(z) = (k + \frac12 ) \pi - \im(p)$$ for every $z \in J$. 
        There exists a  (possibly negative) $k_0 \in \Z$ such that, for all $k \leq k_0$, $\phi_k(J) \subset \{z : \re(z) <0\}$, 
        and thus, for $k \leq k_0$, $\exp \circ\, \phi_k(J) \subset B(0,1)$. 
        Writing $\psi_k := \exp \circ \,\phi_k$ on $J$, $\psi_k$ maps $J$ onto a subinterval of the imaginary axis, see Figure~\ref{fig:two}. As $\phi_k$ is affine,  the distortion of $\psi_k$ on an interval $W \subset J$ is bounded by $\exp(|\phi_k(W)|)$. 
\begin{figure}
    \centering
    \def\svgwidth{0.8\columnwidth}
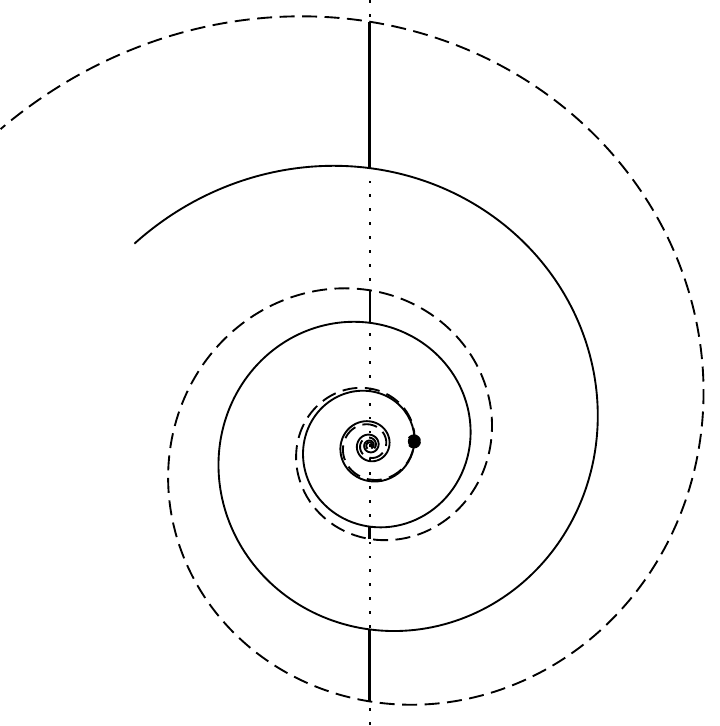
    \caption{Two logarithmic spirals ($\exp(L_{\alpha_0}(p))$ and $\exp(L_{\alpha_1}(p))$, drawn with $p=0$) and the increasing (in length) subintervals $\psi_k(J), \psi_{k+1}(J), \psi_{k+2}(J)$ of the imaginary axis.}
    \label{fig:two}
\end{figure}
	We have $|D\psi_k| = |D\exp (\phi_k)| |D\phi_k| = |D\phi_k| \exp(\re(\phi_k))$, so 
        \begin{equation} \label{eq:der}
            |D\psi_k(p + i + \alpha)| = \left|\left(k + \frac12 \right) \pi - \im(p)\right| e^{\re(p)} e^{(\frac{\pi}{2}- \im(p))\alpha} e^{k\pi \alpha}.
        \end{equation}
        Thus for $k > |\im(p)|/\pi$, 
        \begin{equation}\label{eq:api}
        |D\psi_{k+1}(p+ i+ \alpha)/D\psi_k(p+ i+ \alpha)| > e^{\alpha\pi},  
        \end{equation}
        so the derivatives grow exponentially. 
        Moreover, there exists $C \in (0,1)$ such that, for each $k \geq k_0$ with $p \notin S_k$, 
        \begin{equation}\label{eq:c1}
            |D\psi_k| > C.
        \end{equation}

        \begin{remark} \textbf{Choice of the constant $N$:} If $\alpha>0$ is small, then there is not much expansion at each revolution. We shall consider blocks of $N$ (half-) revolutions at a time, for large integers $N$. Let $I$ be small open sub-interval of the imaginary axis and let $\hat I := \bigcup_{k\in \Z} (2k \pi i  + I)$.  Let $\eps >0$ and  suppose that $V$ is a subinterval of $J$, that  $\psi_{j}(V) \cap \hat I = \emptyset$ for $j \leq nN$ and that $\psi_{nN}(V) \geq \eps$. 
            We shall obtain estimates for the points in $V$ not meeting $\hat I$ for $j \leq (n+1)N$. To continue by induction, we will need to regain the starting condition length $\geq \eps$. 
    By~\eqref{eq:api}, 
        $|\psi_{nN+j}(V)| \geq e^{j\alpha_0\pi}\eps$, and for $j \leq N$ the length is bounded by $L := |\psi_{(n+1)N}(V)|$. 
    Note $L \geq e^{N\alpha_0\pi}\eps$.
    The number of connected components of the set of points $z \in V$ with $\psi_{nN+j}(z)\notin \hat I$ for $j = 1, \ldots, N$ is bounded by $N (L+2)$ [if $L >2\pi$, one can improve the bound to $NL/2\pi +1$]. 
    The proportion of points $z \in V$ with $\psi_{nN+j}(z)\notin \hat I$ for $j = 1, \ldots, N$ is at least $1 - 2N |I|/\eps$, if one assumes bounded distortion giving a factor of $2$. 
    If we remove all connected components whose image under $\phi_{(n+1)N}$ is less than $\eps$, the remaining proportion is at least $1-2N|I|/\eps - 2\eps N(L+2) / L$. 
    If one
    takes $\eps = N^{-2}$, $|I| = N^{-4}$ 
    and $N$  large, 
    then $L>1$ and 
    the proportion is at least $1 -8/N$, 
    which can be made as close to $1$ as we desire. To get good starting conditions for a forthcoming induction argument, $N$ may need to be taken larger again, and $|I|$ slightly smaller. 
    \end{remark}
    \medskip

     Let an integer $N > 2|k_0| +8\pi$ be large enough that 
        \begin{itemize}
            \item 
                $N\pi > 2|\im(p)|$; 
            \item
                $e^{N\pi \alpha_0 /2} > N^4$;
            \item
                $N e^{\re(p)} > 1;$
            \item
                $1/N^2 < |J|$.
        \end{itemize}
        By \eqref{eq:der} and choice of $N$, 
            for all $z = p+ i+\alpha \in J$,
        \begin{equation}\label{eq:der2}
            |D\psi_{N}(z)| > (N\pi /2) e^{\re(p)} e^{N\pi \alpha_0 /2} > N^4.
        \end{equation}
            From \eqref{eq:api} and choice of $N$, we obtain
        \begin{equation}\label{eq:der3}
            |D\psi_{(n+1)N}(z) / D\psi_{nN}(z)| >   N^4
        \end{equation}
        for each $n \geq 1$ and $z \in J$. 

        Let $M := \sup_{z\in J} |D\psi_N(z)|$, so for any subinterval $J' \subset J$, $|\psi_N(J')| \leq M|J'|$. 
        Let $I$ be an open subinterval of the imaginary axis of length $N^{-4}C/M$ whose $2\pi i$-translates are disjoint from $\exp(p)$ and from $B(0,1)$. Let $\hat I := \bigcup_{k\in \Z} (2k \pi i  + I)$. 
        For $k \leq k_0$, $\psi_k(J) \subset B(0,1)$, so $\psi_k(J) \cap \hat I = \emptyset$. 

        Let $J'$ be a subinterval of $J$ of length $1/N^2$. 
        Let $J_n$ be the set of points $z \in J'$ for which $\psi_k(z) \notin \hat I$ for all $k \leq n$. 
	Note that $J_{k_0} = J'$. 

        We now  deal with the steps from $k_0$ to  $N$, to get a good starting interval. We shall later use induction to pass from $nN$ to $(n+1)N$. 
        For 
         $k = k_0+1, \ldots, N$, 
         $$|\phi_k(J')| < |J'| ((N+\frac12)\pi - \im(p)) < N^{-2}((N+1)\pi/2) < \pi/N.$$   
         Hence the distortion of $\psi_k$ is bounded by $e^{\pi/N} < 2$. 
         For $k = k_0 +1, \ldots, N$, $|\psi_k(J')| \leq |\psi_N(J')|$ and by \eqref{eq:der2}, $|\psi_N(J')| > N^4/N^2 =N^2$. 
         For $k\leq N$, 
        the number of connected components of $\hat I$ intersecting $\psi_k(J')$ is bounded by $|\psi_N(J')|$; 
         it follows that $m(\hat I \cap \psi_k(J')) \leq |\psi_N(J')| |I|.$  
        Using \eqref{eq:c1} and then choice of $M$ and $I$,
        \begin{equation*}
        \begin{split}
            m(J_N) &= |J'| - m\left(J' \cap \bigcup_{k = k_0+1}^N \psi_k^{-1}(\hat I )\right) \\ &\geq |J'| -  (N-k_0) |\psi_N(J')||I| /C 
            \\ & \geq |J'| - (N-k_0) |J'| N^{-4}
        \\& > |J'|/2,
        \end{split}
    \end{equation*}
    say.
        Meanwhile, $J_N$ has at most $(N - k_0)|\psi_N(J')|$ connected components. 
        Therefore, at least one connected component $V$ of $J_N$ must satisfy 
        $$|V| > |J'|/3(N-k_0) |\psi_N(J')|$$
        and, more importantly (by the distortion bound of $2$), 
        $$|\psi_N(V)| > 1/2(N-k_0) > 1/ N^2.$$ 
        Let $\cW_1 := \{V\}$. 

        Now we repeat the argument for general intervals. 
        Let us define $\cW_{n}$ inductively as follows. For $W \in \cW_n$, let $\cA_W$ denote the (finite) collection of connected components $A$ of $J_{(n+1)N} \cap W$ for which $|\psi_{(n+1)N}(A)| \geq 1/N^2.$ 
        Let 
        $$\cW_{n+1} := \cup_{W\in \cW_n} \cA_W.$$
        Note $\cW_1 = \{V\}$ is non-empty. 
        The set 
        $$\Lambda := \bigcap_{n\geq 1} \bigcup_{W\in \cW_n} W$$
        is a closed subset of $J$, as a countable intersection of finite unions of closed sets. For $z \in \Lambda$, $z \in J_k$ for all $k$, so the image of the line passing through $p$ and $z$ is a spiral which avoids $\hat I$. We shall show that $\Lambda$ is non-empty and has dimension at least $1 - 10/N$. 

        For $W \in \cW_n$, let $W^+ := \cup_{A \in \cA_W}A.$
        In order to apply Lemma~\ref{lem:mdp}, we will need to show that $m(W^+)/m(W)$ is close to $1$; in particular it will be at least $1-4/N$. 

        Since $W \in \cW_n$, $|\psi_{nN}(W)| \geq 1/N^2$. 
        Let $k$ satisfy $nN \leq k (n+1)N$. 
        By \eqref{eq:api},   $\psi_k(W)$ has length at least $1/N^2$.
        Hence 
        \begin{equation} \label{eq:mipsi}
            m(\hat I \cap \psi_k(W))/|\psi_k(W)| \leq N^2 |I|.
        \end{equation}
        Now $$|D\phi_k|/|D\phi_{nN}| = ((k+\frac12)\pi - \im(p))/((nN+\frac12)\pi - \im(p)) < 4.$$
        Since $\psi_{nN}(W) \cap \hat I = \emptyset$, one obtains $|\psi_{nN}(W)| \leq 2\pi$ and  $\phi_{nN}(W)$ has length (crudely) bounded by $1/8$. Hence
        $|\phi_k(W)|$ is bounded by $1/2$. Therefore the distortion of $\psi_k$ on $W$ is bounded by $e^{1/2} < 2$. 
        We deduce from this and \eqref{eq:mipsi} ($N$ times, for $k=nN+1, \ldots, (n+1)N$) that 
        the set $Z := J_{(n+1)N} \cap W$  satisfies 
        $m(Z)/|W| \geq 1 - 2N^3|I|$. 
        Meanwhile, by \eqref{eq:der3}, 
        $$|\psi_{(n+1)N}(W)| \geq N^4 |\psi_{nN}(W)| \geq N^4/ N^{2} = N^2.$$ [As an aside, note that the image is  long and  therefore contains many components of $\hat I$, so elements of $\cA_W$ will have length much less than $|W|/2$.]
        The set $Z$  (see Figure~\ref{fig:wj}) has at most $N |\psi_{(n+1)N}(W)|$ connected components. Those of length at least  $2|W|/|\psi_{(n+1)N}(W)| N^2$ get mapped by 
        $\psi_{(n+1)N}$ onto an interval of length at least $1/N^2$, by bounded distortion, so they are contained in $\cA_W$. Knowing a bound for the number of connected components, we deduce that those of length at most 
         $2|W|/ |\psi_{(n+1)N}(W)|N^2$ 
        have measure bounded by $2|W|/N$. 
        Consequently, 
        \begin{align*}
            m(W^+)/m(W) &\geq 1 - 2N^3|I| - 2/N
          \\
        &> 1 - 4/N,
    \end{align*}
    noting $|I| \leq 1/N^4.$ 
    \begin{figure}
    \centering
    \def\svgwidth{0.8\columnwidth}
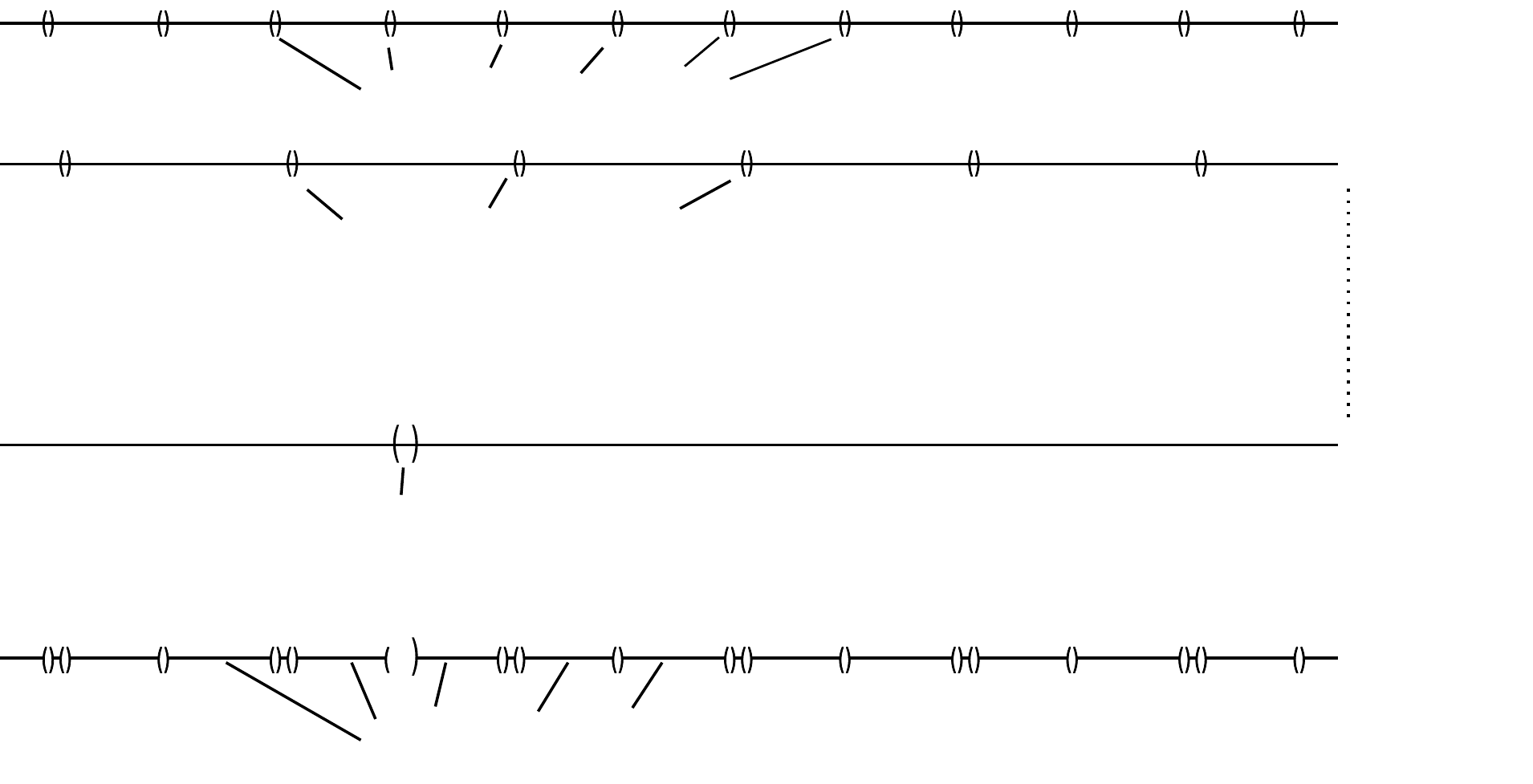
    \caption{A schematic drawing of $Z = W \setminus \bigcup_{k=nN+1}^{(n+1)N}\psi_k^{-1}(\hat I)$ showing multiple copies of $W$. Connected components of $W \cap \psi_k^{-1}(\hat I)$ are tiny, so most of $Z$ will consist of relatively large connected components.}
    \label{fig:wj}
\end{figure}

    Since $|J'| = 1/N^2$,   $|V|< 1/2$ for (the unique interval) $V \in \cW_1$. It follows that for each $W \in \cW_n$,   $|W| \leq 2^{-n}$. 

    Recall we wish to construct a measure on $\Lambda= \cap_{n\geq 1} \cup_{W\in \cW_n} W$, in order to estimate its dimension using Lemma~\ref{lem:mdp}.
    For each $n \geq 1$, let us introduce a  measure $\mu_n$ on $\bigcup_{W \in \cW_n} W$. Let $\mu_1$ be Lebesgue measure restricted to the unique interval $V \in \cW_1$. 
    Define inductively $\mu_n$, for $n \geq 2$, as follows. 
    For each $W \in \cW_{n-1}$, set  
    \begin{equation}\label{eq:mun}
        \mu_n := \frac{m(W)}{m(W^+)}\mu_{n-1}
    \end{equation}
        on $W^+$, and $\mu_n:= 0$ on $W \setminus W^+$.
    As defined,  $\mu_n(W^+) = \mu_{n-1}(W)$ for each $W \in \cW_{n-1}$, whence  $\mu_k(W) = \mu_n(W)$ for all $k \geq n$ and each $W \in \cW_n$. 
    Since also $\max_{W \in \cW_n}|W| \leq 2^{-n}$, there exists a unique (weak) limit measure $$\mu :=\lim_{n\to \infty} \mu_n$$ and $\mu$ is supported on $\Lambda$ with $\mu(\Lambda) = \mu_n(J') = |V|$. We need to check the limit measure is well-behaved. In particular, it should not have atoms. 
    By induction using~\eqref{eq:mun}, $$\mu_n(W) \leq |W| (1-4/N)^{-n+1}$$ for $W \in \cW_n$. 
    Thus for $z \in \Lambda$ and $n \geq 1$, 
    there are at most two elements $W_1,W_2 \in \cW_n$ intersecting all tiny neighbourhoods of $z$, and 
    $$\mu_n(W_i) \leq |W_i| (1-4/N)^{-n+1} \leq 2^{-n/2 +1}$$
    for $i=1,2$. Hence $\mu_k(W_i) \leq 2^{-n/2 +1}$ for all $k \geq n$, and so 
     $\mu(\{z\}) \leq 2^{-n/2 + 2}$ for each $n$; therefore $\mu$ is continuous (i.e.\ it has no atoms). 
     Since $\mu$ is continuous, $\mu(W) = \mu_k(W)$ for each $W \in \cW_n$ and $k \geq n$. 

    We are nearly at a stage where we can apply Lemma~\ref{lem:mdp}. For each $n$, let $\cQ_n$ denote a finite partition
    of $J' \setminus \bigcup_{W \in \cW_n} W$ into intervals such that each $Q \in \cQ_n$ has $|Q| < 2^{-n}$. For each $Q \in \cQ_n$, $\mu_k(Q) = 0$ for all $k \geq n$, hence $\mu(Q) = 0$ (using continuity of $\mu$). 
    Let $$\cP_n := \cQ_n \cup \cW_n,$$ so $\cP_n$ is a partition of $J'$. 
    From the construction, $$\mu(P) \leq |P| (1-4/N)^{-n} \leq |P|(1+5/N)^n$$ for each $n \geq 1$ and $P \in \cP_n$. 
    By Lemma~\ref{lem:mdp}, the Hausdorff dimension of $\Lambda$ is at least $1 - 10/N$. 
    Recalling $\Lambda \subset J'$, set  $Y := \xi^{-1}(\Lambda) \subset X'$. 
    Applying Lemma~\ref{lem:angles} to $Y$, we obtain that for each $\alpha \in Y$, $\exp \circ \exp(L_\alpha(p))$ is not dense. As $\xi$ is a translation it preserves Hausdorff dimension, and the dimension of $Y$ is at least $1 - 10/N$. But $N$ could be taken arbitrarily large (of course, $I$ and therefore $Y$ depend on choice of $N$). Noting that any set with subsets of dimension arbitrarily close to $1$ has dimension at least $1$, the proof of Theorem~\ref{thm:two} is complete. 
    \end{proof}

    \begin{proof}[Proof of Corollary~\ref{cor:theta}]
        Taking $p = 2\pi i$, the intersections of the spiral with the positive imaginary axis occur at points $\exp(2\pi\alpha k)2\pi i$, $k \in \Z$. From Theorem~\ref{thm:two} and Lemma~\ref{lem:2pidense}, we deduce that the set of $\alpha$ in any open interval $X$ for which $\exp(2\pi \alpha k)$ is not dense modulo $1$ has dimension $1$, from which the result follows (taking $X = (\log I)/2\pi$). 
\end{proof}
\begin{remark} One could use Lemma~\ref{lem:2pidense} to prove Theorem~\ref{thm:one} (using Koksma's theorem \cite{Koksma:dense}), however the lemma cannot be used to prove Theorem~\ref{thm:threesteps}, neither does Theorem~\ref{thm:threesteps} provide information about distributions of sequences modulo $1$. \end{remark}
    \section{Distribution}
    Given $\alpha, p$ and  the corresponding spiral $\Sigma : t \mapsto \exp (p + t(i + \alpha))$, we set  $\rho := \exp \circ \Sigma$, a  parametrisation of $\exp\circ \exp$ of the line $L_\alpha(p)$. We now study the distribution of $\rho(t)$. 
For every measurable set $A$ and $T>1$, let 
$$\mu_T(A):= \frac1{2T} m(\{t \in [-T,T]: \rho(t) \in A\}),$$ 
where $m$ denotes Lebesgue measure. Then $\mu_T$ is a probability measure. 
    \begin{proof}[Proof of Theorem~\ref{thm:weak}]
        We can assume without loss of generality that $\alpha > 0$. 
        Since  $ 
        \lim_{t \to -\infty} |\Sigma(t)| = 0,$ 
        $$ \lim_{t \to -\infty} \rho(t) = 1.$$ 
        Let us define intervals $$I^+_n := 2n\pi + [-\pi/2 + 1/n - \im(p), \pi/2 + 1/n - \im(p)].$$ The intervals are chosen so that for $t \in I^+_n$ and $n$ large, 
        $$\re(\Sigma(t)) \geq \sin(1/n) \exp(\re(p) + 2n\pi \alpha - \pi/2 - \im(p)) \gg 1,$$ so 
        $$
        \lim_{n \to \infty} |\rho(I_n^+)| = +\infty.$$
        Setting $I_n^- := I_n^+ + \pi$, we similarly obtain that 
        $$
        \lim_{n \to \infty} |\rho(I_n^-)| = 0.$$

        Noting that the intervals $I^\pm_n$ have length approaching $\pi$, and the spaces between them have length $\approx 2/n$, it follows that 
        $$\lim_{T \to \infty} \mu_T = \delta_1/2 + \frac{\delta_0 + \delta_\infty}4,$$
        as required. 
    \end{proof}
%


\bibliography{expexpbib} 
\bibliographystyle{plain}
  
\end{document}